\documentclass[12pt,oneside]{amsart}

\usepackage[pdfencoding=auto]{hyperref}
\usepackage[utf8]{inputenc}
\usepackage[scale=0.7]{geometry}
\usepackage{microtype}
\usepackage{graphicx}
\usepackage{xcolor}
\usepackage{url}

\title{Zero-sum squares in bounded discrepancy $\mathbf{\{-1,1\}}$-matrices}

\author{Alma R. Arévalo}
\address[A. R. Arévalo]{Instituto de Matemáticas, UNAM}
\email{arevalo@ciencias.unam.mx}

\author{Amanda Montejano}
\address[A. Montejano]{Facultad de Ciencias, UNAM campus Juriquilla}
\email{amandamontejano@ciencias.unam.mx}

\author{Edgardo Roldán-Pensado}
\address[E. Roldán-Pensado]{Centro de Ciencias Matemáticas, UNAM Campus Morelia}
\email{e.roldan@im.unam.mx}

\newtheorem{teo}{Theorem}
\newtheorem{lema}[teo]{Lemma}
\newtheorem{coro}[teo]{Corollary}
\newtheorem*{obs}{Observation}
\newtheorem{conj}[teo]{Conjecture}
\newtheorem{claim}{Claim}
\theoremstyle{remark}

\newcommand{\abs}[1]{\left\lvert #1 \right\rvert}
\newcommand{\floor}[1]{\left\lfloor #1 \right\rfloor}
\newcommand{\ceil}[1]{\left\lceil #1 \right\rceil}
\newcommand{\M}{\mathcal M}
\newcommand{\N}{\mathcal N}
\DeclareMathOperator{\disc}{disc}

\begin{document}

\begin{abstract}
	For $n\ge 5$, we prove that every $n\times n$ matrix $\M=(a_{i,j})$ with entries in $\{-1,1\}$ and absolute discrepancy $\abs{\disc(\M)}=\abs{\sum a_{i,j}}\le n$ contains a zero-sum square except for the split matrix (up to symmetries). Here, a square is a $2\times 2$ sub-matrix of $\M$ with entries $a_{i,j}, a_{i+s,s}, a_{i,j+s}, a_{i+s,j+s}$ for some $s\ge 1$, and a split matrix is a matrix with all entries above the diagonal equal to $-1$ and all remaining entries equal to $1$.
	In particular, we show that for $n\ge 5$ every zero-sum $n\times n$ matrix with entries in $\{-1,1\}$ contains a zero-sum square.
\end{abstract}

\maketitle

\section{Introduction}
An \emph{Erickson matrix} is a square binary matrix that contains no \emph{squares} (defined below) with constant entries. 
In \cite{Eri1996}, Erickson asked for the maximum value of $n$ for which there exists an $n\times n$ Erickson matrix.
In \cite{AM2008} Axenovich and Manske gave an upper bound of around $2^{2^{40}}$. This gargantuan bound was later improved by Bacher and Eliahou in \cite{BE2010} using computational means to the optimal value of $15$.

This paper is devoted to studying a zero-sum analogue of Erickson matrices considering matrices with entries in $\{-1,1\}$. For this purpose, of course, we need to take into account the discrepancy or deviation of the matrix.

Discrepancy theory is an important branch in combinatorics with deep connections to many other areas in mathematics (see \cite{Cha2001} for a good general reference on this topic). In particular, one important result is Tao's recent proof of the Erdős discrepancy conjecture, \cite{Tao2016}, which states that any sequence of the form $f:\mathbb N\to \{-1,1\}$ satisfies that $\sup_{n,d}\abs{\sum_{j=1}^n f(jd)}=\infty$.

In recent years, we have witnessed the study of zero-sum structures becoming increasingly popular. Some examples related to our work are the following. Caro et al. proved in \cite{CHM2019} that for any finite sequence $f:[1,n]\to \{-1,1\}$ satisfying that $\abs{\sum_{i=1}^nf(i)}$ is small, there is a set of consecutive numbers $B\subset [1,n]$ for which $\abs{\sum_{i\in B}f(i)}$ is also small (in particular, small can mean zero-sum). Another interesting work is \cite{BE11}, where Buttkewitz and Elsholtz proved the existence of zero-sum arithmetic progressions with four terms in certain sequences $f:\mathbb N\to \{-1,1\}$.
Balister et al. studied matrices where, for some fixed integer $p$, the sum of each row and each column is a multiple of $p$ \cite{BCRY02}; they showed that these matrices appear in any large enough integer square matrix.

Throughout this paper a matrix with entries in $\{-1,1\}$ will be called a \emph{binary matrix}. Given an $n\times m$ binary matrix $\M=(a_{i,j})$, the \emph{discrepancy} of $\M$ is the sum of all its entries, that is
\begin{equation}\label{eq:disc}
\disc(\M)=\sum_{\substack{1\le i\le n \\ 1\le j\le m}} a_{i,j}.
\end{equation}

Note that if $a^+$ is the number of entries in $\M$ equal to $1$ and $a^-$ is the number of entries in $\M$ equal to $-1$ then
\begin{equation}\label{eq:pm}
\disc(\M) = a^+ - a^- = 2 a^+-nm = nm-2 a^-.
\end{equation}

We define a \emph{zero-sum matrix} $\M$ as a binary matrix with $\disc(\M)=0$.

A \emph{square} $S$ in $M=(a_{i,j})$ is a $2\times 2$ sub-matrix of $\M$ of the form
\[
S =
\begin{pmatrix}
	a_{i,j}   & a_{i,j+s}   \\
	a_{i+s,j} & a_{i+s,j+s}
\end{pmatrix}
\]
for some positive integer $s$. A \emph{zero-sum square} is a square $S$ with $\disc(S)=0$. Note that a square in $\M$ is not zero-sum if and only if it has at least $3$ equal entries.

We are interested in studying matrices $\M$ which do not contain zero-sum squares, we call these matrices \emph{zero-sum-square-free}.

Note that this may also be seen as a $2$-coloring of an $n\times m$ rectangular grid. In this case, zero-sum is the same as balanced.

An $n\times m$ binary matrix $\M=(a_{i,j})$ is called \emph{$t$-split} if for some $0\le t < n+m$,
\[
a_{i,j}=\begin{cases}
	-1 & \text{if } i+j\le t+1, \\
	1  & \text{otherwise. }     \\
\end{cases}\]
If either $\M$, its negative, or its horizontal or vertical reflections are $t$-split for some $t$, we say that $\M$ is \emph{split}. This is relevant since split matrices are always zero-sum-square-free. We are also interested in the possible discrepancies they can have.

\begin{obs}\label{obs:discdiag}
	For an $n\times m$ $t$-split matrix $\M$ with $n\le m$,
	\[\disc(\M)=\begin{cases}
		n m - t (t+1)         & \text{if } t\le n,    \\
		n m+n(n-1)-2 n t      & \text{if } n< t\le m, \\
		(n+m-t-1) (n+m-t)-n m & \text{if } m< t.
	\end{cases}\]
\end{obs}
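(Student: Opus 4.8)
The plan is to count the number $a^-$ of entries equal to $-1$ and then invoke \eqref{eq:pm}, which gives $\disc(\M) = nm - 2a^-$. By the definition of a $t$-split matrix,
\[
a^- = \#\bigl\{(i,j) : 1\le i\le n,\ 1\le j\le m,\ i+j\le t+1\bigr\},
\]
so we are simply counting the lattice points of the $n\times m$ grid lying weakly below the anti-diagonal line $i+j = t+1$. Summing over rows, $a^- = \sum_{i=1}^{n} \max\bigl(0,\ \min(m,\ t+1-i)\bigr)$, and the three cases in the statement correspond exactly to the three possible shapes of this region: a triangle contained in the grid, a triangle truncated by the side $i=n$, and a triangle truncated by both $i=n$ and $j=m$ (a pentagon).

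In the first case $t\le n\ (\le m)$, the summand is nonzero only for $1\le i\le t$, and there $1\le t+1-i\le t\le m$, so $\min(m,t+1-i)=t+1-i$; hence $a^- = \sum_{i=1}^{t}(t+1-i) = \binom{t+1}{2} = \tfrac{t(t+1)}{2}$, and $\disc(\M)=nm-t(t+1)$. In the second case $n<t\le m$, the summand is nonzero for every $1\le i\le n$, and again $t+1-i\le t\le m$, so the minimum equals $t+1-i$; thus $a^- = \sum_{i=1}^{n}(t+1-i) = n(t+1) - \binom{n+1}{2}$, and a short simplification gives $\disc(\M) = nm - 2a^- = nm + n(n-1) - 2nt$.

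For the third case $m<t\ (<n+m)$ I would avoid re-summing by exploiting the central symmetry of the construction: the substitution $(i,j)\mapsto(n+1-i,\,m+1-j)$ is a bijection of the grid that turns the condition $i+j\ge t+2$ (the region where $\M$ equals $1$) into the condition $i'+j'\le n+m-t$. Writing $t' = n+m-t-1$, this shows that the number of $+1$'s of the $t$-split matrix equals the number of $-1$'s of the $t'$-split matrix of the same dimensions, whence $\disc(\M)=-\disc(\M')$ for the $t'$-split matrix $\M'$. Since $m<t<n+m$ forces $0\le t' < n\le m$, the first case applies to $\M'$ and yields $\disc(\M) = -\bigl(nm - t'(t'+1)\bigr) = (n+m-t-1)(n+m-t) - nm$, as claimed. (Alternatively one can sum directly, splitting the range of $i$ at $i = t+1-m$.)

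No step here is genuinely hard; this is essentially a bookkeeping exercise with arithmetic series. The only place warranting care is keeping the case boundaries and the truncation points of the summation consistent with one another — in particular, checking that the hypothesis $t<n+m$ is exactly what guarantees the region in the third case is nonempty and that the reduced parameter satisfies $t'\le n$ there, so that the first case is legitimately applicable.
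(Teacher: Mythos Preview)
Your argument is correct. The paper states this as an \emph{Observation} and offers no proof at all, so there is nothing to compare against; what you wrote is exactly the straightforward verification one would expect, counting $a^-$ row by row and using \eqref{eq:pm}. The symmetry trick you use for the third case (reducing to the first via $(i,j)\mapsto(n+1-i,m+1-j)$ and $t'=n+m-t-1$) is a nice way to avoid repeating the summation, and your check that $m<t<n+m$ forces $0\le t'<n$ so that the first case genuinely applies is the only point that needed care.
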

From this we may conclude the following.
\begin{coro}\label{coro:split}
	Let $\M$ be a $t$-split binary matrix such that $\abs{\disc(\M)} \le n$. If $\M$ is of size $n\times n$, then $t \in \{n-1,n\}$ and $\abs{\disc(\M)} = n$. If $\M$ is of size $n\times (n+1)$, then $t = n$ and $\disc(\M) = 0$.
\end{coro}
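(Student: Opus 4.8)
The plan is to derive the corollary directly from Observation~\ref{obs:discdiag} by substituting the relevant value of $m$ and then solving the resulting one-variable inequalities in $t$. The quadratics that appear are monotone on the integer ranges that matter, so in each of the three regimes of Observation~\ref{obs:discdiag} only the extreme admissible value of $t$ can possibly satisfy $\abs{\disc(\M)}\le n$.

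First I would treat the $n\times n$ case, where $m=n$, so the middle regime of Observation~\ref{obs:discdiag} is empty and only $\disc(\M)=n^2-t(t+1)$ (for $t\le n$) and $\disc(\M)=(2n-t-1)(2n-t)-n^2$ (for $n<t$) occur. Setting $f(t)=n^2-t(t+1)$, one computes $f(n)=-n$, $f(n-1)=n$ and $f(n-2)=3n-2>n$; since $f$ is strictly decreasing for $t\ge 0$, every $t\le n-2$ gives $\disc(\M)=f(t)\ge f(n-2)>n$, leaving exactly $t=n-1$ and $t=n$, each with $\abs{\disc(\M)}=n$. For the range $n<t$ the change of variable $u=2n-1-t\in\{0,\dots,n-2\}$ rewrites the discrepancy as $u(u+1)-n^2=-f(u)$, and $u\le n-2$ forces $\abs{\disc(\M)}=f(u)\ge 3n-2>n$, so no such $t$ qualifies. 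This establishes the first assertion.

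For the $n\times(n+1)$ case I would repeat this bookkeeping with $m=n+1$ across all three regimes. On $t\le n$ one has $\disc(\M)=n(n+1)-t(t+1)$, which equals $0$ at $t=n$ and $2n$ at $t=n-1$, so monotonicity again singles out $t=n$ with $\disc(\M)=0$. The middle regime reduces to the single value $t=n+1$, where the second formula gives $\disc(\M)=-2n$; and in the last regime the substitution $v=2n-t\in\{0,\dots,n-2\}$ turns the third formula into $v(v+1)-n(n+1)$, whose absolute value is at least $4n-2>n$. Hence both of these regimes are excluded and only $t=n$, $\disc(\M)=0$, survives. The argument is entirely elementary; the only point requiring care is keeping straight which of the three regimes of Observation~\ref{obs:discdiag} applies to each shape and each boundary value of $t$, and confirming the monotonicity of the relevant quadratics on the pertinent integer intervals — so I do not anticipate any genuine obstacle here.
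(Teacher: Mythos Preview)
Your proof is correct and follows exactly the route the paper intends: the paper states the corollary as an immediate consequence of Observation~\ref{obs:discdiag} without giving any details, and you have simply carried out the case analysis in the three regimes of that observation. The computations and the monotonicity arguments all check out, so there is nothing to add.
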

In particular, the discrepancy of a square split matrix never vanishes.

Now we are ready to state our main theorem. 
\begin{teo}\label{thm:main}
	Let $n\ge 5$. Every $n\times n$ non-split binary matrix $\M$ with $\abs{\disc(\M)}\le n$ contains a zero-sum square. In particular, every $n\times n$ zero-sum matrix $\M$ contains a zero-sum square.
\end{teo}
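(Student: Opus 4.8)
The plan is to establish the contrapositive of the first assertion: \emph{if an $n\times n$ binary matrix $\M$ with $n\ge 5$ has $\abs{\disc(\M)}\le n$ and contains no zero-sum square, then $\M$ is split.} Granting this, the ``in particular'' clause follows at once --- a zero-sum matrix has $\disc(\M)=0\le n$, whereas by Corollary~\ref{coro:split} every $n\times n$ split matrix has $\abs{\disc(\M)}=n>0$, so a zero-sum matrix is non-split and hence contains a zero-sum square. To prove the contrapositive one may replace $\M$ by $-\M$ and assume $0\le\disc(\M)\le n$; then by~\eqref{eq:pm} both $a^+$ and $a^-$ are at least $(n^2-n)/2$, so each sign occupies a positive fraction of the $n^2$ entries.

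I would first record the precise local meaning of ``zero-sum-square-free''. A square fails to be zero-sum exactly when it has at least three equal entries, and a short computation shows that the square with corners $a_{i,j},a_{i,j+s},a_{i+s,j},a_{i+s,j+s}$ is zero-sum if and only if its two row partial sums $a_{i,j}+a_{i,j+s}$ and $a_{i+s,j}+a_{i+s,j+s}$ either both vanish or are $+2$ and $-2$ in some order --- equivalently, the same alternative holds for its two column partial sums. Hence $\M$ is zero-sum-square-free if and only if, for all $i,j$ and all $s\ge 1$ with $i+s\le n$ and $j+s\le n$, neither (i) $a_{i,j}\ne a_{i,j+s}$ and $a_{i+s,j}\ne a_{i+s,j+s}$, nor (ii) $a_{i,j}=a_{i,j+s}=-a_{i+s,j}=-a_{i+s,j+s}$ holds. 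Taking $s=1$ already forces that any two consecutive rows differ in a set of columns containing no two consecutive indices, and that on each maximal block of columns in which they agree they agree to one and the same value; symmetrically for consecutive columns, and more generally the case $s\ge 2$ constrains rows and columns at all spacings.

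The heart of the matter is to upgrade these local constraints --- together with the squares of side $s\ge 2$ and the discrepancy bound --- to a global structure statement: every zero-sum-square-free $n\times n$ matrix is, up to the symmetries in the definition of ``split'' (all of which, along with transposition, preserve the hypothesis), either split or else contains a large monochromatic block, e.g.\ a full band of rows or columns of one sign, or a matrix differing from a constant one in only $O(n)$ entries. The squares of side $s\ge 2$ are exactly what forbid a ``wiggly'' interface: if the boundary between the $-1$-region and the $+1$-region were to rise and then fall, then two rows on opposite sides of the peak that reach the same column would complete, for a suitable side length $s\ge 2$ and a suitable pair of columns, to a square of type~(i) --- a contradiction. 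Thus the interface is a monotone staircase, which is precisely the split shape. Finally, the hypothesis $\abs{\disc(\M)}\le n$ eliminates the degenerate alternatives, since a monochromatic band or a near-constant matrix has $\abs{\disc(\M)}$ of order $n^2$; invoking Observation~\ref{obs:discdiag} and Corollary~\ref{coro:split} then leaves only split matrices, which proves the contrapositive and hence the theorem.

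The main obstacle is the global structure statement. The $s=1$ constraints alone are genuinely too weak --- they tolerate isolated defects and non-monotone boundaries --- so one must analyze rows, columns, and all spacings simultaneously, with substantial case analysis and careful bookkeeping near the boundary of the matrix; the smallest cases (notably $n=5$, where the ``order $n^2$'' and ``order $n$'' regimes are not yet comfortably separated) will likely require separate, possibly computer-assisted, verification. A further difficulty is that ``zero-sum-square-free'' and ``non-split'' behave poorly under passing to submatrices --- deleting the last row or column preserves the former but not the discrepancy bound or non-splitness --- so there is no clean induction on $n$ and the classification must be done directly.
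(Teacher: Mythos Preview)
Your proposal is a plan rather than a proof, and the central step --- the ``global structure statement'' that a zero-sum-square-free matrix must look like two monochromatic regions separated by a monotone staircase --- is not established. The argument you sketch presupposes that there \emph{is} a well-defined ``interface between the $-1$-region and the $+1$-region'' whose monotonicity one can then check; but nothing in the hypotheses guarantees any such two-region decomposition a priori. Indeed, the exceptional $5\times 5$ matrices in Lemma~\ref{lem:compu} are zero-sum-square-free, have $\abs{\disc}\le 2n$, and are not split, so for small $n$ the heuristic fails and one cannot expect a one-line ``peak forces a zero-sum square'' argument to work uniformly. Your rise-and-fall reasoning would need, at minimum, a rigorous definition of the boundary and a proof that every zero-sum-square-free matrix admits one; this is precisely the hard part, and your write-up acknowledges it as ``the main obstacle'' without resolving it.

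Your final remark that ``there is no clean induction on $n$'' is in fact where the paper's approach diverges most sharply from yours. The paper \emph{does} proceed by induction, but first strengthens the statement to cover both $n\times n$ and $n\times(n+1)$ matrices (Theorem~\ref{thm:disc_m}). One cuts $\M$ into four quadrant blocks, uses a discrepancy-interpolation argument to locate a square or almost-square sub-block $\N$ with $\abs{\disc(\N)}<\lfloor n/2\rfloor$, applies the inductive hypothesis to force $\N$ to be split, and then --- via Lemmas~\ref{lem:tool} and~\ref{lem:tool2} --- propagates the split structure outward, eventually either covering all of $\M$ or pinning down enough $+1$ entries to violate $\abs{\disc(\M)}\le n$. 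The base cases $n\le 11$ are handled by computer (Lemma~\ref{lem:compu}). So the missing idea in your plan is exactly this: obtain a split sub-block by induction and then \emph{extend} it using the square constraints, rather than trying to classify the global shape from scratch.
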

Theorem \ref{thm:main} and Corollary \ref{coro:split} immediately yield the following.
\begin{coro}\label{coro:almostsquare}
	Let $\M$ be an $n \times n$ binary matrix. If $n \ge 5$ and $\abs{\disc(\M)} \le n-1$, then $\M$ contains a zero-sum square.
\end{coro}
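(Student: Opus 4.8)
The plan is to obtain this statement as a direct consequence of Theorem~\ref{thm:main} and Corollary~\ref{coro:split}; the only thing to check is that the split exception appearing in Theorem~\ref{thm:main} cannot occur once the discrepancy budget is reduced to $n-1$.

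First I would record the elementary fact that none of the operations appearing in the definition of a split matrix changes $\abs{\disc}$: negating every entry sends $\disc(\M)$ to $-\disc(\M)$, while reversing the order of the rows or the order of the columns merely permutes the entries and hence leaves $\disc(\M)$ untouched. Consequently, if an $n\times n$ binary matrix $\M$ is split, then there is an $n\times n$ $t$-split matrix $\M'$, for some $t$, with $\abs{\disc(\M')}=\abs{\disc(\M)}$.

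Now let $\M$ be an $n\times n$ binary matrix with $n\ge 5$ and $\abs{\disc(\M)}\le n-1$. I claim $\M$ is non-split. Otherwise, take the associated $t$-split matrix $\M'$ from the previous step; since $\abs{\disc(\M')}=\abs{\disc(\M)}\le n-1\le n$, Corollary~\ref{coro:split} forces $\abs{\disc(\M')}=n$, contradicting $\abs{\disc(\M')}\le n-1$. Hence $\M$ is non-split, and since $\abs{\disc(\M)}\le n-1\le n$, Theorem~\ref{thm:main} applies and yields a zero-sum square in $\M$.

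I do not anticipate any genuine obstacle here: all the difficulty is already absorbed into Theorem~\ref{thm:main}, and this corollary is merely the observation that the unique extremal configuration — the square split matrices, which by Corollary~\ref{coro:split} have $\abs{\disc}$ exactly equal to $n$ — is excluded the moment $\abs{\disc(\M)}<n$.
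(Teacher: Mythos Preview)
Your proof is correct and matches the paper's own approach exactly: the paper simply states that Corollary~\ref{coro:almostsquare} follows immediately from Theorem~\ref{thm:main} and Corollary~\ref{coro:split}. Your write-up is in fact slightly more careful, making explicit the (trivial) point that the symmetries in the definition of ``split'' preserve $\abs{\disc}$, so that Corollary~\ref{coro:split} really does rule out all split matrices with $\abs{\disc(\M)}\le n-1$.
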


Our proof method suggests that a stronger result may hold.

\begin{conj}\label{conj}
	For every $C>0$, there is an integer $N$ with the following property: For all $n\geq N$, every $n\times n$ non-split binary matrix $\M$ with $\abs{\disc(\M)}\le Cn$ contains a zero-sum square.
\end{conj}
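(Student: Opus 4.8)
The plan is to prove the contrapositive in a quantitative form: if $\M$ is an $n\times n$ non-split zero-sum-square-free binary matrix, then $\abs{\disc(\M)}$ must grow superlinearly in $n$. Writing $m(n)$ for the minimum of $\abs{\disc(\M)}$ over all non-split zero-sum-square-free $n\times n$ matrices, it suffices to establish that $m(n)/n\to\infty$; then, given $C>0$, one chooses $N$ so large that $m(n)>Cn$ for all $n\ge N$, and the conjecture follows immediately. Theorem~\ref{thm:main} already gives $m(n)>n$ for $n\ge 5$, so the real task is to amplify this linear lower bound into a superlinear one.

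The engine is a structure theorem for zero-sum-square-free matrices, refining the analysis behind Theorem~\ref{thm:main}. For a fixed offset $s$ and rows $i,i+s$, set $g_j=a_{i,j}+a_{i+s,j}\in\{-2,0,2\}$; the offset-$s$ square at columns $j,j+s$ is zero-sum exactly when $g_j+g_{j+s}=0$. Thus zero-sum-square-freeness forces, for \emph{every} $s$, that the sequence $(g_j)$ has no two zeros at distance $s$ and no two opposite nonzero values at distance $s$ (and symmetrically for columns). Two consequences follow quickly. First, no two consecutive rows (or columns) can be equal unless they are constant, since a repeated non-constant row contains adjacent opposite entries and hence yields a zero-sum square. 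Second, if every row is monotone, or every column is monotone, i.e.\ a block of $-1$'s followed by a block of $+1$'s, then the offset-$1$ constraint forbids a boundary jump of size $\ge 2$, the previous remark forbids a jump of size $0$ across non-constant rows, and the higher-offset constraints rule out a zigzagging boundary; the boundary must therefore move monotonically by exactly one per row, so $\M$ is split. Consequently every non-split zero-sum-square-free matrix possesses, after applying the symmetries built into the definition of split, both a non-monotone row and a non-monotone column.

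The quantitative heart of the argument is then to charge a growing amount of discrepancy to these non-monotonicities. I would exploit the offset constraints in their sign-coherence form: if two rows both read $+1$ in some column, they cannot both read $-1$ in a column at any forbidden distance, which propagates long monochromatic runs across many rows at once. The goal is to show that a non-monotone row cannot be an isolated defect --- the constraints at all offsets $s$ force it to interact with a positive fraction of the remaining rows, producing a monochromatic submatrix of area $\omega(n)$, whence $\abs{\disc(\M)}=\omega(n)$. Since only a superlinear bound is needed there is substantial slack: it would already suffice to show that each non-split zero-sum-square-free matrix contains a growing constant block whose imbalance cannot be cancelled elsewhere, and then sum these contributions.

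I expect the main obstacle to be exactly this accumulation step: converting the purely local, per-offset constraints into a global lower bound on $\abs{\disc(\M)}$. The difficulty is that small discrepancy permits the $+1$ and $-1$ regions to be intricately interleaved, and one must rule out non-split configurations in which many defects nearly cancel in discrepancy while still avoiding every zero-sum square. A promising route is an inductive reduction that deletes a carefully chosen row and column, as in the proof of Theorem~\ref{thm:main}, tracking how both $\abs{\disc(\M)}$ and the number of non-monotone lines change, and showing that shrinking the side length by one decreases $\abs{\disc(\M)}$ by at most a constant while never destroying non-splitness; iterating down to the base cases supplied by Theorem~\ref{thm:main} would then force $\abs{\disc(\M)}$ to be large. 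Ensuring that this bookkeeping is uniform --- so that the threshold $N$ depends only on $C$ and not on the particular matrix --- is the delicate point on which the whole argument turns.
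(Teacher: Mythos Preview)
The statement you are attempting to prove is labeled \emph{Conjecture} in the paper; no proof is offered there, and the authors explicitly present it as open. So there is no ``paper's own proof'' to compare against, and the relevant question is whether your proposal actually closes the problem. It does not.

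First, what you wrote is a plan, not a proof. You yourself identify the accumulation step --- turning the per-offset constraints into a superlinear lower bound on $\abs{\disc(\M)}$ --- as ``the main obstacle'' and then describe only ``a promising route''. That is precisely the content of the conjecture; everything preceding it is preparatory.

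Second, one of your structural claims is false as stated. You assert that if every row of a zero-sum-square-free matrix is monotone (a block of $-1$'s followed by a block of $+1$'s) then the matrix must be split. Take the $5\times 5$ matrix with every entry equal to $-1$ except $a_{3,5}=+1$. Every row is monotone, the matrix is zero-sum-square-free (any square containing the lone $+1$ has sum $-2$, any other square has sum $-4$), and it is not split. Your argument breaks at the clause ``the previous remark forbids a jump of size $0$ across non-constant rows'': it says nothing about consecutive \emph{constant} rows, so the breakpoint sequence can stall and reverse through a run of constant rows. Thus ``all rows monotone'' does not by itself force split structure, and the dichotomy you set up (either split, or there is a non-monotone row and a non-monotone column) collapses.

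Third, the inductive strategy at the end cannot yield a superlinear bound in the form you describe. If deleting one row and one column changes $\abs{\disc}$ by at most a constant $c$ and preserves non-splitness and zero-sum-square-freeness, then after iterating down from side $n$ to side $k$ you obtain $\abs{\disc(\M)} \ge \abs{\disc(\M_k)} - c(n-k)$. Applying Theorem~\ref{thm:main} at the bottom gives only $\abs{\disc(\M_k)} > k$, hence $\abs{\disc(\M)} > k - c(n-k)$, which is worse than applying Theorem~\ref{thm:main} directly to $\M$. To beat the linear bound you would need the per-step change in $\abs{\disc}$ to be bounded below by something growing, not merely bounded above by a constant --- the inequality is pointing the wrong way.

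In short: the conjecture remains open, and your sketch neither supplies the missing global estimate nor a correct reduction to Theorem~\ref{thm:main}.
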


There is a more general question.
Let $f:\mathbb N \to \mathbb N$ be the function associating to each $n \in \mathbb N$ the largest possible integer $f(n)$ such that every $n \times n$ non-split binary matrix $\M$ satisfying $\abs{\disc(\M)}\le f(n)$ contains a zero-sum square. Obviously $f(n) < n^2$. In fact, $f(n) \le \frac {n^2}{2} + o(n^2)$ as is shown by the $n\times n$ matrix $\M=(a_{i,j})$ defined by
\[
a_{i,j}=\begin{cases}
-1 & \text{if } i,j\text{ are both even,} \\
1  & \text{otherwise.}
\end{cases}
\]
This is a zero-sum-square-free matrix and its discrepancy is about $\frac {n^2}{2}$.
Theorem \ref{thm:main} implies $f(n) \ge n$ if $n \ge 5$. It would be very interesting to determine whether $f(n)$ is linear or quadratic in $n$.

This paper is organized as follows. Section \ref{sec:small} is devoted to particular cases, which were analyzed by a computer. In Section \ref{sec:proof} we give a stronger version of Theorem \ref{thm:main} and its proof. Finally, Section \ref{sec:concl} contains our conclusions and some open questions.

\section{Small cases}\label{sec:small}
Since the proof of Theorem \ref{thm:main} uses induction, we must analyze some of the smaller cases to obtain our induction basis. It is possible to do this by hand but the amount of work is quite large, so we aid ourselves with a computer program.

Our program takes three positive parameters as input: $n$, $m$ and $d$, which should satisfy $n\le m$ and $d\le nm$. The output is a list of all $n\times m$ binary matrices which are zero-sum-square-free and satisfy $\disc(\M)=d$. To do this we use a standard backtracking algorithm that explores all binary matrices with the desired properties.
The code is written in C++ and is available at
\begin{center}
	\url{https://github.com/edyrol/ZeroSumSquares}.
\end{center}

We are mainly interested in two types of zero-sum-square-free matrices: square matrices (with $m=n$) and almost-square matrices (with $m=n+1$). These are the sizes of matrices we need to understand in order to prove Theorem \ref{thm:main}.

Recall that split matrices are zero-sum-square-free, so we always find these examples.

\begin{lema}\label{lem:compu}
	Let $\M$ be a zero-sum-square-free binary matrix with $\abs{\disc(M)}\le 2n$. If $\M$ is of size $n\times (n+1)$ and $4\le n\le 11$, then $\M$ is either a split matrix or it is one of $28$ exceptional $4\times 5$ matrices. If $\M$ is of size $n\times n$ and $5\le n\le 11$, then $\M$ is either a split matrix or it is one of $32$ exceptional $5\times 5$ matrices.
\end{lema}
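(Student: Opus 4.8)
In principle this is a finite case analysis one could run by hand, but the number of cases is large, so the plan is to settle it with the backtracking program described above; the content of the ``proof'' is then to describe the search, argue it is complete, and check that it terminates. For a fixed target discrepancy $d$ the program fills an $n\times m$ binary matrix one entry at a time in a fixed cell order (say row by row, left to right). When $a_{i,j}$ is placed, the squares that become complete are exactly those with bottom-right corner $a_{i,j}$, namely the ones with corners $a_{i-s,j-s},a_{i-s,j},a_{i,j-s},a_{i,j}$ for $s\ge 1$ with $i-s\ge 1$, $j-s\ge 1$; if any of these is zero-sum the branch is abandoned, since such a zero-sum square survives in every completion. The branch is also cut whenever the partial sum $\sigma$ of the placed entries and the number $u$ of still-empty cells satisfy $\sigma+u<d$ or $\sigma-u>d$, since then no completion has discrepancy $d$. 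Both pruning rules are sound, so running the program over all integers $d$ with $\abs d\le 2n$ enumerates exactly the zero-sum-square-free $n\times m$ matrices with $\abs{\disc(\M)}\le 2n$.

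To make the sizes up to $n=11$ tractable I would also exploit symmetry: negation (which flips the sign of $d$), the horizontal and vertical reflections, and — for square matrices — the transpose all send squares to squares and zero-sum-square-free matrices to zero-sum-square-free matrices, so one may enumerate only orbit representatives (e.g.\ with $d\ge 0$ and a normalized corner entry) and recover the full list afterwards. Combined with the two pruning rules, this keeps the explored tree small, much as in the analogous monochromatic problem of Bacher and Eliahou, where square-free binary matrices already become rare at modest sizes.

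Having run the search for every $n\times(n+1)$ with $4\le n\le 11$ and every $n\times n$ with $5\le n\le 11$, the last step is bookkeeping: check that each matrix produced is split in the sense of the Introduction, or else appears on the explicit list of $28$ matrices of size $4\times 5$, respectively $32$ matrices of size $5\times 5$; in particular that for $n\ge 5$ in the almost-square case and $n\ge 6$ in the square case only split matrices occur. Exhaustiveness of these two lists of exceptions is precisely what completeness of the search guarantees.

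The main obstacle is not a mathematical estimate but reliability: the conclusion is only as trustworthy as the implementation. I would guard against this by (i) proving both pruning rules valid, as above, so the search provably misses nothing; (ii) recording the number of nodes explored in the largest instances, to certify that the computation actually finishes; and (iii) cross-checking the output against an independent reimplementation and against the cases $n=4$ and $n=5$, which are small enough to verify directly. The genuine difficulty is thus engineering the search to be at once complete and fast enough; the mathematics involved is elementary.
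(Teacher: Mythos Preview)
Your proposal is correct and matches the paper's own approach: the lemma is established by an exhaustive backtracking computer search over all $n\times m$ binary matrices with the given discrepancy range, and the paper likewise offers no hand proof. If anything, your write-up is more thorough than the paper's, which simply describes the program's input/output and reports running times, whereas you explicitly justify the pruning rules, exploit the symmetry group, and propose independent cross-checks.
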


Although Lemma \ref{lem:compu} mentions $60$ exceptional matrices, there are essentially only $11$. The rest can be obtained by taking the symmetries of these $11$ (generated by reflections and rotations, and their negatives). These $11$ matrices are shown in Figure \ref{fig:small}.

\begin{figure}
	\includegraphics[scale=0.7]{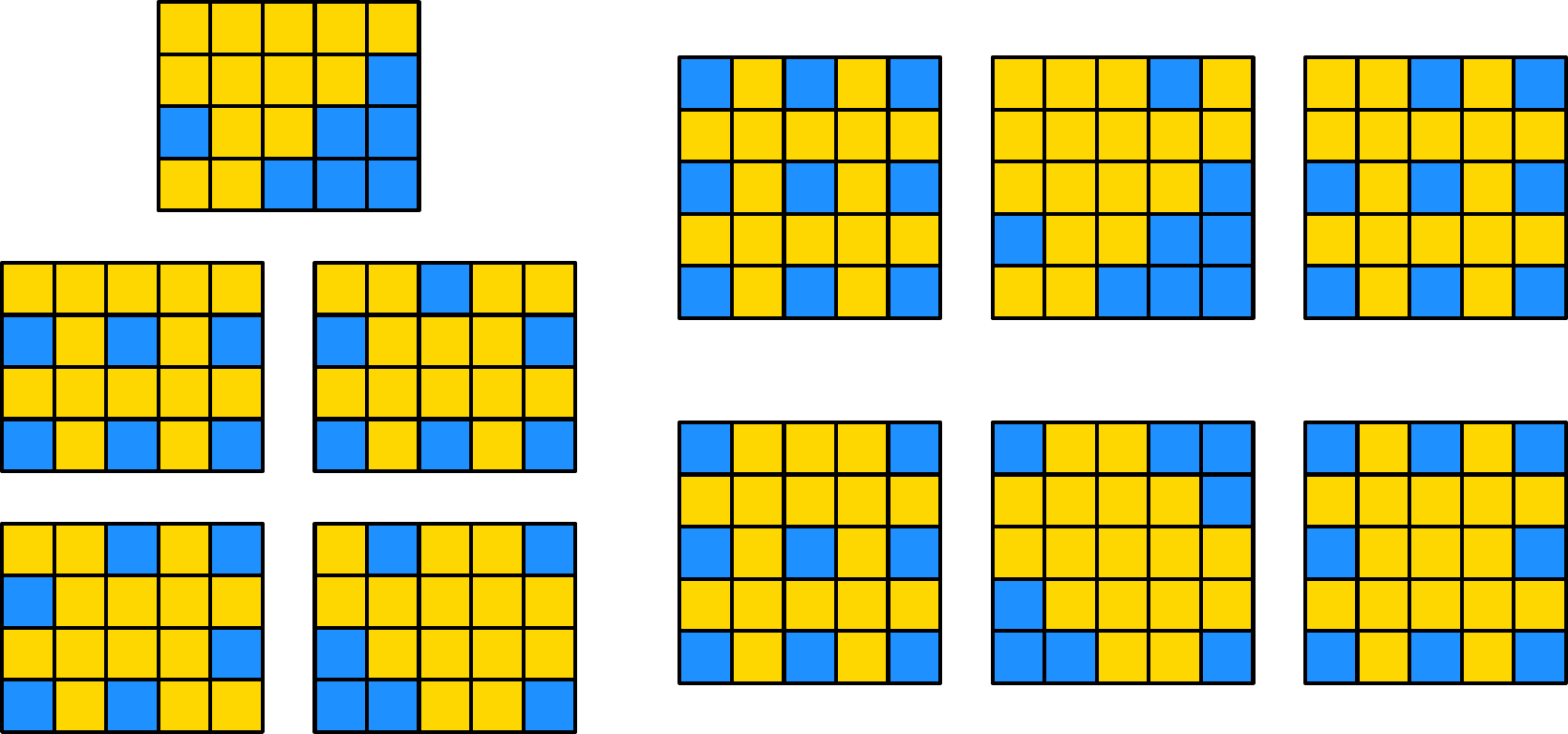}
	\caption{Non-split zero-sum-square-free binary matrices.}
	\label{fig:small}
\end{figure}

The computer program does not take too long to run. Using a home computer with an i7-3770 3.40GHz processor and compiling the program with GCC 8.1.0, it takes less than a second to analyze a $9\times 9$ matrix with fixed discrepancy. For the larger matrices, it can take a couple of minutes. For example, depending on the discrepancy, it takes between $30$ and $50$ seconds to analyze an $11\times 11$ binary matrix and between $1.5$ and $3$ minutes for an $11\times 12$ binary matrix.

\section{Proof}\label{sec:proof}
Our proof of Theorem \ref{thm:main} uses an induction argument. The main idea in the induction step is to split a large zero-sum-square-free matrix $\M$ into four square (with equal side-lengths) or almost-square (with side-lengths differing by $1$) sub-matrices. Since it is not always possible to only use squares, we are forced to understand the behavior of both square and almost-square zero-sum-square-free matrices. For the induction to work, we prove the following stronger statement.

\begin{teo}\label{thm:disc_m}
	Let $n\ge 5$ and $m\in \{n,n+1\}$. Every $n\times m$ non-split binary matrix $\M$ with $\abs{\disc(\M)}\le n$ contains a zero-sum square.
\end{teo}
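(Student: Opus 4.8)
The plan is to prove Theorem~\ref{thm:disc_m} by strong induction on $n$, using Lemma~\ref{lem:compu} to cover the base cases $5\le n\le 11$ (for both square and almost-square matrices) and the induction step for $n\ge 12$. So suppose $\M$ is an $n\times m$ non-split binary matrix with $m\in\{n,n+1\}$, $n\ge 12$, and $\abs{\disc(\M)}\le n$, and assume the statement holds for all smaller values of $n$. The first step is to partition $\M$ into four sub-matrices $\M_{11},\M_{12},\M_{21},\M_{22}$ by cutting the rows into a block of $\lceil n/2\rceil$ and a block of $\lfloor n/2\rfloor$, and similarly for the columns; each block then has side-lengths in $\{\lceil n/2\rceil,\lfloor n/2\rfloor\}$, so each of the four sub-matrices is either square or almost-square with side-lengths that are $\ge 5$ (since $n\ge 12$ forces $\lfloor n/2\rfloor\ge 6$). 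Crucially, any square \emph{within} one of the four blocks is a square of $\M$, so if any block contains a zero-sum square we are done. Hence we may assume every block is itself zero-sum-square-free.

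The second step is to control the discrepancies of the blocks. Since $\disc(\M)=\disc(\M_{11})+\disc(\M_{12})+\disc(\M_{21})+\disc(\M_{22})$ and $\abs{\disc(\M)}\le n$, we cannot immediately bound each $\abs{\disc(\M_{ij})}$ by (half the block's side-length); but if \emph{some} block had $\abs{\disc(\M_{ij})}$ larger than its side-length, we would need to combine information with the other blocks. The cleaner route is: by the induction hypothesis, each block is either split or has $\abs{\disc(\M_{ij})}>$ (block side-length). If all four blocks are split, one argues directly that $\M$ must itself be split (matching split halves along their common boundary row/column is very rigid — the orientation of each split half is forced by consistency with its neighbours, and a short case analysis on the four possible split types shows the only globally consistent configurations reassemble into a split $\M$), contradicting our hypothesis. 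So at least one block, say $\M_{11}$, is non-split and therefore has large discrepancy, $\abs{\disc(\M_{11})}\ge \lceil n/2\rceil+1$ roughly.

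The third step extracts a contradiction from a block with large discrepancy. Here is where the real work lies, and it is the step I expect to be the main obstacle. The idea is that a zero-sum-square-free block with discrepancy close to its area is highly structured: intuitively it is ``close to monochromatic'' in large regions, and one should be able to show that near-monochromatic regions of $\M$, when they are large enough, either force a zero-sum square outright or force neighbouring cells (in adjacent blocks, reached by squares of various step sizes $s$) into a pattern that also produces a zero-sum square. Concretely, if a block is mostly $+1$ with only a few $-1$'s, then among the rows/columns of $\M$ passing through that block there are many nearly-all-$+1$ lines; picking a square with one corner on such a line and the opposite corner in a region of $\M$ forced to contain a $-1$ (which must exist somewhere since $\M$ is non-split and hence not monochromatic, and in fact has discrepancy only $\le n\ll n^2$) should yield two $+1$'s and one $-1$, and then a fourth entry that can be chosen $-1$ by varying $s$ — giving a zero-sum square. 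Making this ``varying $s$'' argument precise, and handling the boundary cases where the available range of $s$ is small, is the crux; I would organize it by first proving an auxiliary lemma: \emph{a zero-sum-square-free $k\times \ell$ matrix with $k,\ell\ge 5$ and $\abs{\disc}\ge k+1$ (say) cannot be a sub-block of a larger zero-sum-square-free matrix that also has small total discrepancy}, perhaps by showing such a block has a full monochromatic row and a full monochromatic column of opposite signs, which already pins down a lot. Finally, after ruling out all cases, we conclude $\M$ contains a zero-sum square, completing the induction and hence the proof of Theorem~\ref{thm:disc_m}, and with it Theorem~\ref{thm:main}.
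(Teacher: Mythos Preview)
Your setup (induction on $n$, base cases via Lemma~\ref{lem:compu}, quartering $\M$) matches the paper, but from there the argument diverges and does not close. Two concrete gaps:

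First, the claim ``if all four quadrants are split then $\M$ is split'' is not true as stated, even under the standing assumption that $\M$ is zero-sum-square-free, without substantial work you have not supplied. Split blocks come in eight symmetry types (four reflections times sign), and there is no boundary-matching reason that forces a single global type; what you would actually need is a lemma like ``a zero-sum-square-free matrix containing a large split block is itself split'', and that is essentially the content of the paper's Lemmas~\ref{lem:tool} and~\ref{lem:tool2}, which you never invoke.

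Second, and more seriously, your ``third step'' is not an argument but a hope. Having a quadrant with $\abs{\disc}>\lfloor n/2\rfloor$ tells you almost nothing structurally: the discrepancy could still be tiny compared to the block's area, so ``near-monochromatic'' is unjustified, and the ``vary $s$ to hit a $-1$'' idea has no mechanism behind it. You yourself flag this as the main obstacle, and it is.

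The paper sidesteps both issues with one key idea you are missing: an \emph{interpolation} step. Rather than reasoning about a high-discrepancy quadrant, the paper observes (Claim~\ref{claim:4split}) that either some $\M_i$ already has $\abs{\disc(\M_i)}<\lfloor n/2\rfloor$, or two of the $\M_i$ have discrepancies of opposite sign; in the latter case one slides a window of the same dimensions from one to the other, changing the discrepancy by at most $\lfloor n/2\rfloor+1$ per step, and therefore finds an intermediate block $\N$ with $\abs{\disc(\N)}<\lfloor n/2\rfloor$. Now induction forces $\N$ to be split, and the rest of the proof is propagation: Lemma~\ref{lem:tool} grows the split block to a large $t$-split corner $\M_0$, Lemma~\ref{lem:tool2} pushes further to $\M_1$ of side $\lfloor 3t/2\rfloor$, and a final count of forced $+1$ entries shows $a^+$ is too large for $\abs{\disc(\M)}\le n$ unless $\M=\M_1$ is already split. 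The moral: look for a sub-block with \emph{small} discrepancy, not large, so that induction hands you a split block whose rigidity you can exploit.
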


The basis of the induction is given by the computer analysis described in Section~\ref{sec:small}. It is not indispensable to use a computer to prove Lemma \ref{lem:compu}, although doing it by hand would require either substantial case analysis or a clever argument that has eluded us.

For the rest of the proof we proceed as follows: assuming that the discrepancy of $\M$ is not too large, we find a relatively large sub-matrix $\N$ of $\M$ with small discrepancy. By the induction hypothesis, if we assume that $\M$ is a zero-sum-square-free matrix, we conclude that $\N$ must be split. It turns out that having a relatively large split sub-matrix $\N$ determines the value of many other entries of $\M$. From those values we find that, either $\M$ is itself split as desired, or we can estimate $\disc(\M)$ and find that it is larger than $n$ which contradicts the hypothesis of Theorem \ref{thm:disc_m}.

For integers $h,j,k,l$ satisfying $1\leq h< k\leq n$ and $1\leq j< l\leq m$, we define a \emph{block of $\M$} as the $(k-h+1)\times(l-j+1)$ sub-matrix
\[
	\M[h,k;j,l]=\begin{pmatrix}
	a_{h,j}   & a_{h,j+1}   & \dots  & a_{h,l-1} & a_{h,l}   \\
	a_{h+1,j} & a_{h+1,j+1} & \dots  & a_{h+1,l-1} & a_{h+1,l} \\
	\vdots    & \vdots      & \ddots & \vdots & \vdots    \\
	a_{k-1,j}   & a_{k-1,j+1}   & \dots  & a_{k-1,l-1} & a_{k-1,l}   \\
	a_{k,j}   & a_{k,j+1}   & \dots  & a_{k,l-1} & a_{k,l}
	\end{pmatrix}.
\]

The next lemma shows that a block $\M'$ in a zero-sum-square-free matrix $\M$ is split if a certain sub-block of $\M'$ is also split. It is divided into four instances. Parts (a) and (b) refer to blocks obtained by removing the first row and the first column of $\M'$, respectively. Parts (c) and (d) refer to blocks obtained by removing the last row and the last column of $\M'$, respectively.

\begin{lema}\label{lem:tool}
Let $\M$ be a zero-sum-square-free $n\times m$ matrix with $n\ge 5$. Let $1\leq h< k\leq n$ and $1\leq j< l\leq m$ be integers such that $k-h=l-j=b\geq 2$. Consider the block $\M'=\M[h,k;j,l]$ of size $(b+1)\times(b+1)$,
\begin{enumerate}
\item[(a)] if $\M[h+1,k;j,l]$ is $b$-split then $\M'$ is $(b+1)$-split.
\item[(b)] if $\M[h,k;j+1,l]$ is $b$-split then $\M'$ is $(b+1)$-split.
\item[(c)] if $\M[h,k-1;j,l]$ is $b$-split then $\M'$ is $b$-split.
\item[(d)] if $\M[h,k;j,l-1]$ is $b$-split then $\M'$ is $b$-split.
\end{enumerate}
\end{lema}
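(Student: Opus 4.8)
The plan is to prove part (a) in detail and then observe that (b), (c), (d) follow by symmetry (transpose for (b), and the $180^\circ$ rotation combined with the convention that reflections/negatives of split matrices are split for (c) and (d)). So fix the block $\M'=\M[h,k;j,l]$ of size $(b+1)\times(b+1)$ and assume the sub-block $\M[h+1,k;j,l]$, obtained by deleting the top row of $\M'$, is $b$-split. By re-indexing we may as well assume $h=j=1$, so $\M'$ has rows $1,\dots,b+1$ and columns $1,\dots,b+1$, and the bottom $b$ rows form a $b$-split $b\times(b+1)$ matrix: entry $a_{i,j}=-1$ when $i+j\le b+1$ and $a_{i,j}=1$ otherwise, for $2\le i\le b+1$. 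We must show that adding the correct top row makes the whole $(b+1)\times(b+1)$ block $(b+1)$-split, i.e. $a_{1,j}=-1$ for $1\le j\le b$ and $a_{1,b+1}=1$.

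The key mechanism is that in a zero-sum-square-free matrix, once three entries of a square are pinned down, the fourth is forced (it must equal one of the existing values so as to create $\ge 3$ equal entries — more precisely, a square with entries summing to $0$ is forbidden, so if two of the known three are equal and the third differs, the unknown entry is forced equal to the majority; and we will arrange to always be in that situation). First I would determine $a_{1,b+1}$: consider a square using rows $1$ and $b+1$ and two columns. Using columns $1$ and $b+1$ with $s=b$ gives the square with entries $a_{1,1}, a_{1,b+1}, a_{b+1,1}, a_{b+1,b+1}$; from the split sub-block $a_{b+1,1}=1$ (since $(b+1)+1>b+1$) and $a_{b+1,b+1}=1$, so $a_{1,1}+a_{1,b+1}$ must be forced — if $a_{1,1}=1$ this square is already non-zero-sum regardless of $a_{1,b+1}$, so I instead work column by column from the known pattern. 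A cleaner route: for each $j$ with $2\le j\le b$, look at the square on rows $1, b+2-j$ (here $2\le b+2-j\le b$, a valid row of the sub-block) and columns $j-1, j$ with $s=1$; three of its four entries lie in the split sub-block and are known, and they are not all equal (the split pattern changes sign exactly along the antidiagonal $i+j=b+1$), which forces $a_{1,j-1}$ and hence, sweeping $j$ downward, forces $a_{1,1},\dots,a_{1,b-1}$ to be $-1$. Two more squares (one near the top-right corner, one using $s=b-1$ or $s=b$) then pin $a_{1,b}=-1$ and $a_{1,b+1}=1$, completing the $(b+1)$-split pattern. The condition $n\ge 5$, equivalently $b\ge 2$, is what guarantees enough room (at least two interior rows/columns) for all these squares to exist.

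The main obstacle will be bookkeeping: choosing, for each target entry $a_{1,j}$, a square whose other three entries are already known (either in the split sub-block or previously forced) and genuinely not all equal, so that the zero-sum-square-free hypothesis forces the value we want rather than leaving it free. The boundary columns $j=1$, $j=b$, $j=b+1$ and the antidiagonal entry are the delicate spots, since there the split pattern has its "corners" and a naively chosen square may have three equal entries (hence impose no constraint) or straddle the boundary of $\M'$; handling these few cases individually, using squares of step size $s=1$ in the interior and one or two larger steps near the corners, is the crux. Once (a) is established, (c) is its mirror image under the reflection $i\mapsto (b+2)-i$ (which sends $(b+1)$-split blocks to $b$-split blocks after a negation/reflection, all of which preserve being split), and (b), (d) are the transposes of (a), (c); I would state these reductions explicitly and omit the repeated computation.
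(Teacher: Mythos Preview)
There are several concrete errors that break the argument. First, your re-indexed description of the $b$-split sub-block is off by one: if $\M[2,b+1;1,b+1]$ is $b$-split, then in the sub-block's own coordinates $(i',j')$ with $i'\in[1,b]$ one has entry $-1$ iff $i'+j'\le b+1$; translating to the global row $i=i'+1$ gives $a_{i,j}=-1$ iff $i+j\le b+2$, not $b+1$. Consequently $a_{b+1,1}=-1$ (not $1$ as you assert), and the correct target for the top row is that \emph{all} of $a_{1,1},\dots,a_{1,b+1}$ equal $-1$, since for $(b+1)$-split one has $1+(b+1)=b+2\le b+2$. So your stated goal $a_{1,b+1}=1$ is wrong. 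Second, your ``cleaner route'' square on rows $1,\,b+2-j$ and columns $j-1,\,j$ with $s=1$ is not a square in the paper's sense: the row-step is $b+1-j$ while the column-step is $1$, and these agree only when $j=b$. Since the central forcing mechanism relies on genuine squares, the main step of the proposal does not go through as written.

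The paper's argument sidesteps all of this bookkeeping with a single uniform family of squares. For each $1\le i\le b$ it takes the step-$i$ square with corners $(h,l-i)$, $(h,l)$, $(h+i,l-i)$, $(h+i,l)$. The two bottom entries lie in the known sub-block with opposite signs ($a_{h+i,l-i}=-1$ on the anti-diagonal, $a_{h+i,l}=1$ in the last column), so zero-sum-square-freeness forces $a_{h,l-i}=a_{h,l}$. Hence the entire top row is constant; one further $s=1$ square at the top-left corner (where $a_{h+1,j}=a_{h+1,j+1}=-1$) pins that constant to $-1$. This handles every top-row entry, including the corners you flagged as delicate, in one stroke, with no case analysis.
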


\begin{proof}
	If $\M[h+1,k;j,l]$ (respectively $\M[h,k;j+1,l]$) is $b$-split, we need to prove that all entries in the top row (respectively in the leftmost column) of $\M'$ are equal to $-1$. If $\M[h,k;j,l-1]$ (respectively $\M[h,k-1;j,l]$) is $b$-split, we need to prove that all entries in the rightest column (respectively bottom row) of $\M'$ are equal to $1$. Since the arguments are analogous for each case, we only show the first one. Assume that $\M[h+1,k;j,l]$ is $b$-split then, for every $1\leq i \leq b$,
	\begin{equation}\label{eq:values}
	a_{h+i,l-i}=-1 \mbox{ and } a_{h+i,l}=1.
	\end{equation}
	Consider now the square
	\[
		S=\begin{pmatrix}
		a_{h,l-i} & a_{h,l} \\
		a_{h+i,l-i} & a_{h+i,l}
		\end{pmatrix}
	\]
	and recall that, since $\M'$ is a zero-sum-square-free matrix, any square $S$ in $\M_0$ has at least $3$ equal entries. Thus, (\ref{eq:values}) implies that $a_{h,l-i}=a_{h,l}$ for every $1\leq i \leq b$. Therefore, the elements in the first row of $\M'$, $a_{h,j},\dots,a_{h,l}$, are all equal. Finally, since $a_{h,j}=a_{h,j+1}$ and $a_{h+1,j}=a_{h+1,j+1}=-1$, the same argument for the square
	\[S=\begin{pmatrix}
		a_{h,j} & a_{h,j+1} \\
		a_{h+1,j} & a_{h+1,j+1}
	\end{pmatrix}\]
	implies that $a_{h,j}=a_{h,j+1}=-1$, so all entries in the top row of $\M'$ are equal to $-1$. This shows that $\M'$ is indeed $(b+1)$-split.
\end{proof}

Once we have a $t$-split block $\M'$, we can also deduce the values of other entries which are not necessarily adjacent to $\M'$.

\begin{lema}\label{lem:tool2}
Let $\M$ be a zero-sum-square-free $n\times m$ matrix, where $\M' = \M\left[1,k;1,l\right]$ is $t$-split with $t < k <n$ and $t < l <m$.

If $l < r \le \min(t+l-1,n)$, then the entries $a_{r,i}$ have the same value for
\[i\in [1,\floor{(t+l-r+1)/2}]\cup [r-t+1,l].\]

Analogously, if $k<c\le \min(t+k-1,m)$, then the entries $a_{i,c}$ have the same value for
\[i\in [1,\floor{(t+k-c+1)/2}]\cup [c-t+1,k].\]
\end{lema}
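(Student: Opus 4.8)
The plan is to exploit the structure of the $t$-split block $\M'=\M[1,k;1,l]$ together with the zero-sum-square-free property, working one row (or column) at a time below $\M'$. Fix a row index $r$ with $l<r\le\min(t+l-1,n)$. In the portion of $\M$ lying in columns $1,\dots,l$ and rows $1,\dots,k$ we already know every entry: $a_{i,j}=-1$ if $i+j\le t+1$ and $a_{i,j}=1$ otherwise. The idea is to pin down $a_{r,i}$ by pairing column $i$ with a suitable column $i'$ inside $\M'$, using rows $1$ and $r$ to form a square. Concretely, for a pair of columns $i<i'$ both at most $l$, the square with corners $a_{1,i}, a_{1,i'}, a_{r,i}, a_{r,i'}$ has at least three equal entries; if we choose the pair so that $a_{1,i}=a_{1,i'}$ is forced (both are $-1$, which happens when $i,i'\le t$), then we learn that either $a_{r,i}=a_{r,i'}$, or three of the four entries agree in one of the other patterns. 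The delicate point is that knowing $a_{1,i}=a_{1,i'}=-1$ alone does not immediately give $a_{r,i}=a_{r,i'}$: it could be that $a_{r,i}=a_{r,i'}=1$ as well, which is consistent, or the square is not zero-sum because three entries are $-1$. So what the square argument really yields is: if $a_{1,i}=a_{1,i'}$ then $a_{r,i}$ and $a_{r,i'}$ cannot take \emph{different} values unless that difference is compensated — but with only two free entries and two fixed equal entries, "at least three equal" forces $a_{r,i}=a_{r,i'}$. Hence \textbf{all} of $a_{r,1},\dots,a_{r,t}$ (the columns $i\le t$, which satisfy $a_{1,i}=-1$) are equal to each other; this already covers a big chunk of the claimed index set.

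Next I would bring in the rows of $\M'$ below the first one to reach columns beyond $t$. For a column index $i$ with $r-t+1\le i\le l$, consider the square with corners $a_{r-i+1,\,i'}$-type entries: more precisely, pick the row $\rho=r-i+1$ (note $1\le\rho\le k$ since $r-l+1\le\rho\le t$, and $\rho\le t<k$), so that inside $\M'$ the entry $a_{\rho,i}$ sits exactly on the anti-diagonal $\rho+i=r+1>t+1$, giving $a_{\rho,i}=1$; pairing column $i$ with a column $i_0\le t$ in rows $\rho$ and $r$ — where $a_{\rho,i_0}=-1$ if $\rho+i_0\le t+1$, which we can arrange by taking $i_0$ small — we again get a square with known entries in row $\rho$ and use it to compare $a_{r,i}$ with $a_{r,i_0}$. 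Carrying this out carefully should show $a_{r,i}=a_{r,i_0}$ for all $i\in[r-t+1,l]$, and since $i_0\le t$ lies in the first block, this value is the common value already established for columns $1,\dots,t$. The lower end of the second interval, $i\ge r-t+1$, is exactly the constraint needed for the anti-diagonal entry $a_{\rho,i}$ with $\rho\le t$ to be a $+1$; the upper end $i\le l$ is just the width of the block. Combining, $a_{r,i}$ is constant on $[1,t]\cup[r-t+1,l]$, which contains $[1,\lfloor(t+l-r+1)/2\rfloor]$ since $(t+l-r+1)/2\le t$ (because $r>l$) and $[r-t+1,l]$ as stated.

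I should double-check the index $\lfloor(t+l-r+1)/2\rfloor$: it is presumably the sharp threshold below which the "row-$1$ pairing" argument applies directly, i.e.\ the largest $i$ such that some companion column $i'\le t$ with $a_{1,i}=a_{1,i'}=-1$ can be found while keeping the relevant anti-diagonal entries in $\M'$ on the correct side of $t+1$; the floor comes from splitting the available room in two. Rather than re-derive the exact cutoff from scratch, I would verify that the two intervals $[1,\lfloor(t+l-r+1)/2\rfloor]$ and $[r-t+1,l]$ overlap or abut precisely when the two sub-arguments' ranges meet, so that the constancy propagates across the whole union; if there is a gap I would need a third square linking a representative of each interval, which again uses rows of $\M'$ strictly between row $1$ and row $t$. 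The column statement is obtained by transposing the entire argument (exchanging the roles of $k$ and $l$, of rows and columns), so no separate work is needed.

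The main obstacle I anticipate is bookkeeping the precise ranges: making sure every auxiliary square I invoke genuinely lies inside $\M$ (row indices $\le n$, column indices $\le m$) and that the entries I claim to know are genuinely inside the $t$-split block $\M'$ rather than in the unknown region below or to the right of it. The hypotheses $t<k<n$, $t<l<m$ and $r\le t+l-1$ (resp.\ $c\le t+k-1$) are exactly what guarantee this, so the real content of the proof is organizing the case analysis on $i$ so that for each $i$ in the claimed set there is a valid square certifying $a_{r,i}$ equals the common value — and confirming that the floor-expression is the exact place where the "direct" certificate via row $1$ stops working and the "anti-diagonal" certificate via an interior row of $\M'$ takes over.
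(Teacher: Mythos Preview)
Your second argument (for $i\in[r-t+1,l]$) is essentially the paper's: taking $\rho=r-i+1$ and $i_0=1$ you form the genuine square on rows $\rho,r$ and columns $1,i$, whose two known entries are $a_{\rho,1}=-1$ and $a_{\rho,i}=1$, and \emph{opposite} known entries force the two unknown entries to coincide. That part is fine.

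The first argument, however, fails on two independent counts. First, the ``square'' with corners $a_{1,i},a_{1,i'},a_{r,i},a_{r,i'}$ is only a square in the sense of this paper if $i'-i=r-1$; since $r>l>t$ while you want $i,i'\le t$, no such pair exists, so you are really invoking rectangles, which the zero-sum-square-free hypothesis says nothing about. Second, even granting rectangles, the implication ``two fixed equal entries force the two free entries to be equal'' is simply false: with $a_{1,i}=a_{1,i'}=-1$, the configuration $a_{r,i}=1,\ a_{r,i'}=-1$ has three equal entries and is perfectly compatible with being non-zero-sum. What the hypothesis actually excludes in that situation is only $a_{r,i}=a_{r,i'}=+1$. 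The mechanism that \emph{does} force equality of the two unknown entries is having the two \emph{known} entries of a square take \emph{opposite} values; this is precisely what the paper exploits. For $i\le\lfloor(t+l-r+1)/2\rfloor$ the paper uses the square on rows $r-l+i,\,r$ and columns $i,\,l$ (side $l-i$), where $a_{r-l+i,i}=-1$ (because $r-l+2i\le t+1$, which is exactly the origin of the cutoff $(t+l-r+1)/2$ you were unsure about) and $a_{r-l+i,l}=1$, yielding $a_{r,i}=a_{r,l}$. Taking $i=1$ links this to your second family via $a_{r,1}=a_{r,l}$.

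So the missing idea is: for the first interval, pair column $i$ with column $l$ using the row $r-l+i$ inside $\M'$, not row $1$; this both produces a valid square and places the two known entries on opposite sides of the anti-diagonal.
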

\begin{proof}
	Assume $1\le i\le (t+l-r+1)/2$ and consider the square
	\[
	S = \begin{pmatrix}
	a_{r-l+i,i} & a_{r-l+i,l} \\
	a_{r,i} & a_{r,l}
	\end{pmatrix}.
	\]
	Note that, since $\M'$ is $t$-split and $r-l+2i\le t+1$, $a_{r-l+i,i}=-1$ and $a_{r-l+i,l}=1$.
	So two entries of $S$ have opposite values and therefore $a_{r,i}=a_{r,l}$.
	
	If $r-t+1 \le i \le l$, consider the square
	\[
	S = \begin{pmatrix}
	a_{r+1-i,1} & a_{r+1-i,i} \\
	a_{r,1} & a_{r,i}
	\end{pmatrix}.
	\]
	Since $\M'$ is $t$-split and $r+1-i\le t$, $a_{r+1-i,1}=-1$. Furthermore, since $i\le l$, $a_{r+1-i,i}=1$. So two entries of $S$ have opposite values and therefore $a_{r,1}=a_{r,i}$.
	
	In conclusion, $a_{r,i}=a_{r,l}$ for any $i\in [1,(t+l-r+1)/2]$, in particular, $a_{r,1}=a_{r,l}$. If $i\in [r-t+1,l]$ then $a_{r,i}=a_{r,1}$. Therefore, all of these values are equal. The proof for columns is analogous.
\end{proof}

\begin{proof}[Proof of Theorem \ref{thm:disc_m}]
By Lemma \ref{lem:compu} we know the theorem holds for any $n\le 11$ and $m\in\{n,n+1\}$.
Let $\M=(a_{i,j})$ be a $n\times m$ binary matrix with $n\ge 12$, $m\in\{n,n+1\}$ and $\abs{\disc(\M)}\le n$. We need to prove that either $\M$ is split or it contains a zero-sum square, so we assume henceforth that $\M$ is zero-sum-square-free.

As stated before, we use induction on $n$, so we may assume that the theorem holds true for all square and almost-square binary matrices with smaller dimensions than those of $\M$.

We consider the four blocks of $\M$ formed by splitting $\M$ vertically and horizontally as evenly as possible. To be precise, let
\begin{alignat*}{2}
\M_1 & =\M\Big[1,\floor{\frac n2};   &  & \quad 1,\floor{\frac m2}\Big],             \\
\M_2 & =\M\Big[\floor{\frac n2}+1,n; &  & \quad1,\floor{\frac m2}\Big],              \\
\M_3 & =\M\Big[1,\floor{\frac n2};   &  & \quad\floor{\frac m2}+1,m\Big] \text{ and} \\
\M_4 & =\M\Big[\floor{\frac n2}+1,n; &  & \quad\floor{\frac m2}+1,m\Big].
\end{alignat*}
Note that, for $1\leq i\leq 4$, each block $\M_i$ is either a square or an almost-square matrix. Also, the smallest side of any $\M_i$ is $\floor{\frac n2}$ and the largest is $m-\floor{\frac m2}=\ceil{\frac m2}\le\ceil{\frac{n+1}2}\le \floor{\frac n2}+1$. Therefore, the side-lengths of each $\M_i$ are in the set $\{\floor{\frac n2},\floor{\frac n2}+1\}$.

\begin{claim}\label{claim:4split}
	Either one of the four matrices $\M_i$ satisfies $\abs{\disc{(\M_i)}}<\floor{\frac n2}$ or two of these four matrices have discrepancies with opposite signs.
\end{claim}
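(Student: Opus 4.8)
The plan is to argue by contradiction, using only the additivity of the discrepancy over the four blocks. First I would note that $\M_1,\M_2,\M_3,\M_4$ form a partition of the entries of $\M$ into rectangular blocks, so
\[
\disc(\M)=\disc(\M_1)+\disc(\M_2)+\disc(\M_3)+\disc(\M_4).
\]
Now suppose the claim fails. Then $\abs{\disc(\M_i)}\ge\floor{\frac n2}$ for every $i$, and no two of the numbers $\disc(\M_i)$ have opposite signs. Since $n\ge 12$ we have $\floor{\frac n2}>0$, so none of the $\disc(\M_i)$ is zero; hence, as no two have opposite signs, they must all be positive or all be negative. Replacing $\M$ by $-\M$ if necessary (which changes neither the hypothesis $\abs{\disc(\M)}\le n$ nor the property of being zero-sum-square-free), we may assume $\disc(\M_i)\ge\floor{\frac n2}$ for all $i$.

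Summing over the four blocks then gives $\disc(\M)\ge 4\floor{\frac n2}$. But $4\floor{\frac n2}\ge 2(n-1)>n$ for every $n\ge 3$, and in particular for $n\ge 12$, which contradicts $\abs{\disc(\M)}\le n$. This proves the claim. The only points requiring any care are the elementary inequality $4\floor{\frac n2}>n$ and the observation that the four blocks genuinely tile $\M$, so there is no real obstacle here; the claim is merely the dichotomy that feeds the subsequent structural analysis — in the first case we will obtain a large block of small discrepancy directly, and in the second case an intermediate-value-type argument on a sequence of nested blocks will produce one.
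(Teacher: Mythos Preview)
Your proof is correct and follows essentially the same route as the paper's: assume the claim fails so that all four $\disc(\M_i)$ share a sign and have absolute value at least $\floor{n/2}$, then sum to get $\abs{\disc(\M)}\ge 4\floor{n/2}>n$, contradicting the hypothesis. The paper simply treats the all-positive and all-negative cases separately rather than passing to $-\M$, but otherwise the arguments are identical.
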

\begin{proof}
	If this is not the case and that the four matrices satisfy $\disc{(\M_i)}\ge\floor{\frac n2}$, then $n\ge \disc(\M)=\sum\disc(\M_i)\ge 4\floor{\frac n2}$ which is a contradiction. If the four matrices satisfy $\disc{(\M_i)}\leq-\floor{\frac n2}$ we obtain a contradiction in the same way. Therefore two of the $\M_i$ have discrepancies with opposite signs.
\end{proof}
	
What we actually wish to find is a relatively large block of $\M$ with small discrepancy. By an interpolation argument this is easily achievable.
\begin{claim}\label{claim:matrixN}
	By exchanging $1$ and $-1$ if necessary, we may assume that there is an almost-square $\floor{\frac n2}$-split block $\N$ with side-lengths in the set $\{\floor{\frac n2},\floor{\frac n2}+1\}$ such that $\abs{\disc(\N)}<\floor{\frac n2}$.
\end{claim}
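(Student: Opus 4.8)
Set $b=\floor{n/2}$. Since $n\ge 12$ we have $b\ge 6$, so $b\ge 5$ and $b<b+1<n$; hence the induction hypothesis (Theorem~\ref{thm:disc_m}, applied in its transposed form when the taller side comes first) is available for every block of $\M$ whose side-lengths lie in $\{b,b+1\}$: such a block with discrepancy of absolute value at most $b$ is either split or contains a zero-sum square, and in the latter case so does $\M$, against our standing assumption. The plan is to locate a block $\N'$ of $\M$ with side-lengths in $\{b,b+1\}$ and $\abs{\disc(\N')}<b$, and then read off its structure from the induction hypothesis together with Corollary~\ref{coro:split}: a square block that is split has discrepancy of absolute value $b$ or $b+1$, so a split block with $\abs{\disc(\N')}<b$ cannot be square; hence $\N'$ is almost-square, is split, and Corollary~\ref{coro:split} then forces $\disc(\N')=0$ and that $\N'$ is $b$-split up to symmetry. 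After applying the corresponding symmetry to $\M$ (a reflection together with the exchange of $1$ and $-1$ if necessary, only the latter being recorded in the statement) we may take $\N=\N'$.

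It remains to find $\N'$, using the dichotomy of Claim~\ref{claim:4split}. In the first case some $\M_i$ satisfies $\abs{\disc(\M_i)}<b$, and since its side-lengths already lie in $\{b,b+1\}$ we set $\N'=\M_i$. In the second case two of the $\M_i$ have discrepancies of opposite signs; since the four blocks form a $4$-cycle under edge-adjacency (two of them adjacent when they share part of a boundary), two blocks sharing a boundary have discrepancies of opposite signs, and by reflecting $\M$ we may assume these are the top pair $\M_1,\M_3$ (sharing a vertical boundary) or the left pair $\M_1,\M_2$ (sharing a horizontal boundary).

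Suppose the relevant pair is $\M_1,\M_3$; by exchanging $1$ and $-1$ if necessary, $\disc(\M_1)\ge b$ and $\disc(\M_3)\le -b$. I would then consider the family $\mathcal F$ of all blocks $\M[1,\floor{n/2};c,l]$ with $l-c+1\in\{b,b+1\}$. These blocks all have side-lengths in $\{b,b+1\}$, both $\M_1$ and $\M_3$ belong to $\mathcal F$, and under the operation of adding or removing a single column (alternately on the left and on the right) the members of $\mathcal F$ form one path sweeping across $\M$, along which each step changes the discrepancy by at most $b$, since a column of such a block has exactly $b$ entries. Walking this path from $\M_1$ to $\M_3$, the discrepancy goes from a value $\ge b$ to a value $\le -b$; if $v$ is the first value $\le 0$ attained, the preceding value is $\ge 1$, so $v\ge 1-b$, and the corresponding block $\N'$ satisfies $\abs{\disc(\N')}\le b-1<b$. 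If instead the relevant pair is $\M_1,\M_2$, the argument is identical with rows and columns interchanged, using the family of blocks $\M[h,k;1,\floor{m/2}]$ with $k-h+1\in\{b,b+1\}$ and steps that add or remove a single row.

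The main difficulty is bookkeeping rather than a new idea. One must check, over both parities of $n$ and both values $m\in\{n,n+1\}$, that every block occurring in the interpolation genuinely has both side-lengths in $\{b,b+1\}$, so that the induction hypothesis applies to it — this amounts to tracking which of $\floor{m/2},\ceil{m/2},\ceil{n/2}$ equals $b$ and which equals $b+1$ — and one must verify that the interpolation delivers the \emph{strict} bound $\abs{\disc(\N')}<b$: a bound of $b$ would leave open the possibility that $\N'$ is itself a square split matrix by Corollary~\ref{coro:split}, and the argument would stall. Once $\N'$ is in hand, the passage to $\N$ via the induction hypothesis and Corollary~\ref{coro:split} goes as described in the first paragraph.
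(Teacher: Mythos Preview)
Your approach is essentially the paper's: invoke Claim~\ref{claim:4split}, interpolate between blocks of opposite-sign discrepancy through a chain of blocks with side-lengths in $\{b,b+1\}$ that differ by a single row or column, pick a block at the sign change, and finish via the induction hypothesis and Corollary~\ref{coro:split}. Your preliminary reduction to an \emph{adjacent} opposite-sign pair (using that the four quadrants form a $4$-cycle) is a harmless simplification that lets you slide in only one direction instead of two.

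One point needs care, though. In the horizontal case $(\M_1,\M_2)$ you fix the column range $[1,\floor{m/2}]$, so each step adds or removes a row of $\floor{m/2}$ entries; when $n$ is odd and $m=n+1$ this equals $b+1$, not $b$, and your ``first nonpositive value'' argument then yields only $\abs{v}\le b$ rather than $\abs{v}<b$. So the horizontal case is not literally ``identical'' to the vertical one. Two easy repairs are available. The paper's is cleaner and works uniformly with step size $b+1$: at the sign change, the consecutive values $u>0$ and $v\le 0$ satisfy $\abs{u}+\abs{v}=u-v\le b+1$, hence $\min(\abs{u},\abs{v})\le (b+1)/2<b$, and one takes whichever of the two blocks realises the minimum. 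Alternatively, observe that in the offending sub-case every block in your family has width $b+1$, so the only square blocks present are $(b+1)\times(b+1)$; by Corollary~\ref{coro:split} such a block, if split, has $\abs{\disc}\ge b+1$, so $\abs{v}\le b$ already forces $\N'$ to be almost-square and you can proceed. Either way the gap closes, but as written your ``identical'' sentence hides exactly the bookkeeping you flag in your last paragraph.
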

\begin{proof}
	Claim \ref{claim:4split} either provides the block we want or it gives us two blocks $\N_+$ and $\N_-$ from the set $\{\mathcal{M}_1,\mathcal{M}_2,\mathcal{M}_3,\mathcal{M}_4\}$ with $\disc(\N_+)>0$ and $\disc(\N_-)<0$.
		
	We can construct a sequence $\N_-=\N_1,\N_2,\dots,\N_k=\N_+$ of blocks of $\M$ with the following properties:
	\begin{itemize}
		\item The side-lengths of every $\N_i$ are in $\{\floor{\frac n2},\floor{\frac n2}+1\}$.
		\item For each $1\le i<k$, one of $\N_i$ and $\N_{i+1}$ can be obtained from the other by removing one row or one column.
	\end{itemize}
	In other words, we start with $\N_1=\N_-$ and start moving towards $\N_+$. In each step we add or remove a row or column to $\N_i$ taking care to always leave $\N_{i+1}$ with side-lengths in the set $\{\floor{\frac n2},\floor{\frac n2}+1\}$. Note that in each step we switch from square to almost-square and vice-versa.
		
	At some point the discrepancy changes from negative to positive, so assume that $\disc(\N_i)<0$ and $\disc(\N_{i+1})>0$ for some $1\le i<k$. Since, at each step the discrepancy changes by at most $\floor{\frac n2}+1$, we conclude that either $\N_i$ or $\N_{i+1}$ must have absolute discrepancy at most $(\floor{\frac n2}+1)/2<\floor{\frac n2}$. Let $\N$ be this block.
	
	Now we can use our induction hypothesis on $\N$. Since $\abs{\disc(\N)}<\floor{\frac n2}$, either $\N$ contains a zero-sum square, or it must necessarily be split. Furthermore, by Corollary \ref{coro:split}, if $\N$ is zero-sum-square-free, then it must be an almost-square block and have discrepancy exactly $0$.
\end{proof}
	
In the following Claim we prove that several entries of $\M$ are forced. Note that, if $\N=\M[p,r;q,s]$, the $\floor{\frac n2}$-th diagonal of $\N$ is contained in the $(p+q+\floor{\frac n2}-2)$-th diagonal of $\M$. So, to simplify things, we define
\begin{equation}\label{eq:lower_t}
	t=p+q+\floor{\frac n2}-2\ge\floor{\frac n2}.
\end{equation}

\begin{claim}\label{claim:tsplit}
	By relabeling the entries of $\M$ and exchanging $1$ and $-1$ if necessary, we may assume that the block $\M_0 = \M\left[1,t+1;1,t+1\right]$ is $t$-split.
\end{claim}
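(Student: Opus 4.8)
The plan is to take the block $\N$ furnished by Claim~\ref{claim:matrixN} and inflate it, one row or column at a time, into $\M_0$, using Lemma~\ref{lem:tool} to justify each step; the only real subtlety is the order in which rows and columns are adjoined.

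\textbf{Orientation.} By Claim~\ref{claim:matrixN} there is, after possibly exchanging $1$ and $-1$, an almost-square $\floor{\frac n2}$-split block $\N=\M[p,r;q,s]$, its two sides being $\floor{\frac n2}$ and $\floor{\frac n2}+1$. Using the symmetries of $\M$ --- its reflections, rotations and negation, which is the ``relabeling'' in the statement --- I may assume that $\N$ is displayed in standard $\floor{\frac n2}$-split form, so that $a_{i,j}=-1\iff i+j\le t+1$ throughout $\N$, where $t=p+q+\floor{\frac n2}-2$, and that the corner $(p,q)$ of $\N$ has been pushed near enough to $(1,1)$ that $t\le\min(n,m)-1$. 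This last inequality --- whose verification is a short computation using only the reflections of $\M$ --- is what makes $\M_0=\M[1,t+1;1,t+1]$ a genuine block of $\M$; note also that $\N\subseteq\M_0$. It therefore suffices to show that $a_{i,j}=-1\iff i+j\le t+1$ for every $1\le i,j\le t+1$.

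\textbf{Inflation.} Call a block \emph{good} if all of its entries obey $a_{i,j}=-1\iff i+j\le t+1$; then $\N$ is good, every sub-block of a good block is good, and a good block together with one adjacent full row or column that is itself good is again good. Starting from $\N$ I would first run an ``upper-right phase'': alternately adjoin a new top row, using Lemma~\ref{lem:tool}(a), and a new right column, using Lemma~\ref{lem:tool}(d), each step applied to a suitable square block positioned so that the sub-block the lemma requires to be split lies inside the current good block (and hence is split) while the square itself reaches the new row or column. One checks that throughout this phase the good block equals $\M[p',r;q,s']$, with $p'$ the current top row and $s'$ the current right column, and that after $p-1$ steps of each type it has grown into $\M[1,r;q,t+1]$. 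I would then run the mirror-image ``lower-left phase'', alternately adjoining a new bottom row via Lemma~\ref{lem:tool}(c) and a new left column via Lemma~\ref{lem:tool}(b), which after $q-1$ steps of each type together with one final bottom row turns $\M[1,r;q,t+1]$ into $\M[1,t+1;1,t+1]=\M_0$. As every step is a legitimate instance of Lemma~\ref{lem:tool}, the block $\M_0$ is $t$-split, as claimed. (If $\N$ is ``taller'' instead of ``wider'', interchange rows and columns throughout.)

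\textbf{The main obstacle.} Lemma~\ref{lem:tool} never passes directly from a square block to a larger square block; it only trades an almost-square block for the square one size up (or one orientation of almost-square for the other). Consequently a naive ``extend upward and leftward repeatedly'' is unavailable --- right after an upward step the block is square again --- and the alternation (top, right, top, right, \dots, then bottom, left, bottom, left, \dots) is essentially forced, because the square block needed to adjoin the $\ell$-th new top row only reaches the column that the $(\ell-1)$-th right step has just supplied, and symmetrically. Making this rigorous comes down to a short list of inequalities among $p$, $q$, $\floor{\frac n2}$, $t$, $n$ and $m$ --- chiefly that every auxiliary square block still fits inside $\M$ (its side never exceeds $t+1\le\min(n,m)$) and that the parameter playing the role of ``$b$'' in Lemma~\ref{lem:tool} stays $\ge2$ (in fact it is always at least $\floor{\frac n2}$). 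All of these hold because $n\ge12$ makes $\floor{\frac n2}\ge6$ comfortably large and because $t\le\min(n,m)-1$ confines the growing block to $\{1,\dots,n\}\times\{1,\dots,m\}$. This bookkeeping, together with securing $t\le\min(n,m)-1$ in the first place, is the only genuinely nontrivial content of the claim.
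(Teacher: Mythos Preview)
Your approach is the same as the paper's in spirit --- grow $\N$ into $\M_0$ by repeated applications of Lemma~\ref{lem:tool} --- and your alternating ``upper-right, then lower-left'' scheme is a perfectly valid way to organise that growth. However, the orientation step contains a genuine gap: the assertion that reflections of $\M$ can always secure $t\le\min(n,m)-1$ is false. The only symmetries of $\M$ that keep $\N$ in standard $\floor{n/2}$-split orientation are the identity and the $180^\circ$ rotation combined with negation, and these produce the two values $t$ and $t'=n+m-1-t$. When $m=n+1$ and $t=n$ (equivalently $p+q=\ceil{n/2}+2$, which is a perfectly possible position for $\N$ coming out of Claim~\ref{claim:matrixN}) one gets $t=t'=n$, so neither choice gives $t\le n-1$; then $\M_0=\M[1,n+1;1,n+1]$ is not even a block of the $n\times(n+1)$ matrix $\M$, and your inflation has nowhere to put its final bottom row.

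The paper sidesteps this by reversing your order of operations: it grows first and relabels only afterwards. Its growth process terminates either with the good block equal to all of $\M$ --- so $\M$ is split and the theorem is done --- or anchored at one of the corners $(1,1)$ or $(n,m)$; in the problematic case $t=n$, $m=n+1$, neither corner is attainable (the first would force $t\le n-1$, the second $t\ge m$), so the process necessarily fills $\M$. Your argument is easily repaired along the same lines: if $m=n+1$ and $t=n$, run your inflation as far as it goes and observe that it exhausts $\M$; otherwise pick whichever of $t$ and $n+m-1-t$ is smaller, note that it is then at most $n-1$, and proceed exactly as you wrote.
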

\begin{proof}
	We start with the block $\N=\M[p,r;q,s]$ described in Claim \ref{claim:matrixN}. We repeatedly apply Lemma~\ref{lem:tool} to obtain a sequence of split matrices $\N=\N_1\dots,\N_k$ in the following way.
	Assume that $\N_i=\M[h,k;j,l]$ is a $b$-split block. The block $\N_i$ is either square or almost-square and $b$ differs from the side-lengths of $\N_i$ by at most $1$. There are four possibilities.
	\begin{itemize}
		\item If $b=k-h+1=j-l$, then it follows from parts (a) and (c) of Lemma \ref{lem:tool} that $\M[h-1,k;j,l]$ is $(b+1)$-split (if $h>1$) and $\M[h,k+1;j,l]$ is $b$-split (if $k<n$).
		\item If $b=k-h=j-l+1$, then parts (b) and (d) of Lemma \ref{lem:tool} imply that $\M[h,k;j-1,l]$ is $(b+1)$-split (if $j>1$) and $\M[h,k;j,l+1]$ is $b$-split (if $l<m$).
		\item If $b=k-h=j-l$ then we can remove the last row or column from $\N$ and apply parts (c) and (d) of Lemma \ref{lem:tool} to show that $\M[h-1,k;j,l]$ (if $h>1$) and $\M[h,k;j-1,l]$ (if $j>1$) are $b$-split.
		\item If $b=k-h+1=j-l+1$ then we can remove the first row or column from $\N$ and apply parts (a) and (b) of Lemma \ref{lem:tool} to show that $\M[h,k+1;j,l]$ (if $k<n$) and $\M[h,k;j,l+1]$ (if $l<m$) are $b$-split.
	\end{itemize}
	In any case, let $\N_{i+1}$ be any of the larger split blocks described above, whenever possible.
	
	The process can only stop at $\N_k=\M[h,k;j,l]$ if either $\N_k=\M$ or $\N_k$ is square with $(h,j)=(1,1)$ or $(k,l)=(n,m)$.
	If $(h,j)=(1,1)$, we are done. If $(k,l)=(n,m)$ then we may relabel the entries of $\M$, exchanging $(1,1)$ and $(n,m)$ and exchange $1$ and $-1$ to obtain the desired result.
\end{proof}

\begin{claim}\label{claim:tsplit2}
	The block
	\[
		\M_1 = \M\left[1,\min\left(n,\floor{\frac {3t}2}\right);1,\min\left(m,\floor{\frac {3t}2}\right)\right]
	\]
	is $t$-split.
\end{claim}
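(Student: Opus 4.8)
The plan is to propagate the $t$-split structure of $\M_0=\M[1,t+1;1,t+1]$ outward, adjoining one row or one column at a time; each such step uses Lemma~\ref{lem:tool2} to show that the newly adjoined row (or column) is constant, and then uses the zero-sum-square-free hypothesis through one or two explicit $2\times 2$ squares to identify that constant as $+1$. Put $R=\min(n,\floor{3t/2})$ and $C=\min(m,\floor{3t/2})$; since $m\le n+1$ we have $R\le C\le R+1$, and since $t\ge\floor{n/2}\ge 6$ and $t+1\le n$ we have $t+1\le R$. When $t=n-1$ the block $\M_0$ already has $n$ rows, $R=t+1$, and the square-growth step below is vacuous.

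First I would prove, by induction on $k$ from $t+1$ up to $R$, that the square block $\M[1,k;1,k]$ is $t$-split. In the step $k\to k+1$ one has $t+1\le k\le R-1\le\floor{3t/2}-1$, so all indices used remain $\le n$ and $\le m$. To adjoin row $k+1$: the row part of Lemma~\ref{lem:tool2} applied to $\M[1,k;1,k]$ with $r=k+1$ shows that $a_{k+1,i}$ is constant for $i\in[1,\floor{t/2}]\cup[k-t+2,k]$, and a short interval computation shows this union equals $[1,k]$ precisely because $k\le\floor{3t/2}-1$. This is exactly where the bound $\floor{3t/2}$ comes from — it is the largest $k$ for which Lemma~\ref{lem:tool2} still fixes every entry of the new row. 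Since $k\ge t+1$ gives $a_{k,1}=a_{k,2}=1$, the square $\bigl(\begin{smallmatrix}a_{k,1}&a_{k,2}\\ a_{k+1,1}&a_{k+1,2}\end{smallmatrix}\bigr)=\bigl(\begin{smallmatrix}1&1\\ v&v\end{smallmatrix}\bigr)$ cannot be zero-sum, so $v=1$ and $\M[1,k+1;1,k]$ is $t$-split. Symmetrically, the column part of Lemma~\ref{lem:tool2} applied to $\M[1,k;1,k]$ makes $a_{i,k+1}$ constant on $[1,k]$, and the square on rows $k-1,k$ and columns $k,k+1$ forces that constant to be $1$; finally the square on rows $k+1-s,k+1$ and columns $k+1-s,k+1$ with $s=k+1-\floor{(t+1)/2}$ has its top-left corner on the diagonal $i+j\le t+1$ (hence equal to $-1$) and its other two already-known corners equal to $1$, forcing $a_{k+1,k+1}=1$. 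Thus $\M[1,k+1;1,k+1]$ is $t$-split, and the induction yields that $\M[1,R;1,R]$ is $t$-split.

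If $C=R$ we are done. Otherwise $C=R+1$, which forces $R=n$, $m=n+1$ and $\floor{3t/2}\ge n+1$, and it remains to adjoin column $n+1$ to the $t$-split block $\M[1,n;1,n]$. Here I would not use Lemma~\ref{lem:tool2} — indeed when $t=n-1$ there is no integer strictly between $t$ and $n$, so it does not apply — but argue purely with squares. Set $u=a_{1,n+1}$. For $2\le i\le n$ with $i\ge n+2-t$, the square on rows $1,i$ and columns $n+2-i,n+1$ has a $-1$ at $(1,n+2-i)$ and a $1$ at $(i,n+2-i)$, forcing $a_{i,n+1}=u$. For $1\le i\le\floor{t/2}$, the square on rows $i$ and $2i+n-t$ and columns $t-i+1$ and $n+1$ has a $-1$ at $(i,t-i+1)$, a $1$ at $(2i+n-t,t-i+1)$, and value $u$ at $(2i+n-t,n+1)$ (by the previous case, since $2i+n-t\ge n+2-t$), so again $a_{i,n+1}=u$. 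Because $\floor{t/2}\ge n+1-t$ follows from $\floor{3t/2}\ge n+1$, these two cases cover all $i\in[1,n]$, so the whole column $n+1$ is constant; the square on rows $3,5$ and columns $n-1,n+1$, whose two entries in column $n-1$ equal $1$, then forces $u=1$.

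The routine parts are the interval arithmetic (verifying $[1,\floor{t/2}]\cup[k-t+2,k]=[1,k]$ and its column analogue) and checking that the row and column indices of every auxiliary square lie inside $\{1,\dots,n\}\times\{1,\dots,m\}$. I expect the only genuinely delicate points to be extracting the frontier value $\floor{3t/2}$ from the interval condition and, in the case $m=n+1$, the bookkeeping for column $n+1$ — in particular establishing the equalities $a_{i,n+1}=a_{1,n+1}$ before pinning down the common value $u$, so as not to argue circularly.
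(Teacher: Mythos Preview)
Your argument is correct and follows essentially the same strategy as the paper: iterate Lemma~\ref{lem:tool2} to push the $t$-split structure of $\M_0$ outward one line at a time, using a small $2\times 2$ square with two known $1$'s to pin the new line's common value to $+1$. The frontier $\floor{3t/2}$ arises in both proofs for the same reason, namely the overlap condition on the two intervals produced by Lemma~\ref{lem:tool2}.

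The only noteworthy difference is the order of extension. The paper alternates: from the square $\M[1,k;1,k]$ it first adjoins \emph{column} $k{+}1$ (via the column part of Lemma~\ref{lem:tool2}), obtaining the $t$-split rectangle $\M[1,k;1,k{+}1]$, and then adjoins \emph{row} $k{+}1$ by applying the row part of the lemma to that rectangle. Because the rectangle already contains column $k{+}1$, the row step fixes $a_{k+1,i}$ for all $i\in[1,k{+}1]$, so the corner $a_{k+1,k+1}$ comes for free and no separate diagonal square is needed. This alternation also absorbs your $C=R+1$ case: once $\M[1,n;1,n]$ is reached, one more column step (with $k=n$, $c=n{+}1$) yields $\M[1,n;1,n{+}1]$ directly. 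Your separate treatment of that last column is more cautious about the literal hypothesis $k<n$ in Lemma~\ref{lem:tool2}, and your direct square argument there is fine; the paper simply uses the column half of the lemma, for which the constraint $k<n$ is not actually needed.
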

\begin{proof}
	We start with the block $\M_0 = \M\left[1,t+1;1,t+1\right]$ from Claim \ref{claim:tsplit} and repeatedly apply Lemma \ref{lem:tool2} in the following way.
	
	If $\M'=\M[1,k;1,k]$ is $t$-split with $t+1\le k<m$, apply Lemma \ref{lem:tool2} for columns with $l=k$ and $c=k+1$. The values $a_{i,c}$ are all equal for $i\in [1,k]$ whenever
	\[(c-t+1)-1 \le (t+k-c+1)/2,\]
	which is equivalent to $k+1=c\le 3t/2$.
	If this is the case, consider the square
	\[\begin{pmatrix}
	a_{k-1,k}& a_{k-1,k+1}\\
	a_{k,k}& a_{k,k+1}
	\end{pmatrix}.\]
	Since $a_{k-1,k}=a_{k,k}=1$ and $a_{k-1,k+1}=a_{k,k+1}$, then $a_{k,k+1}$ and therefore every $a_{i,k+1}$ with $i\in[1,k]$ is $1$.
	Thus, $\M[1,k;1,k+1]$ is $t$-split as long as $k+1\le\floor{3t/2}$.
	
	Now, starting with $\M[1,k;1,k+1]$, apply Lemma \ref{lem:tool2} for rows with $l=k+1$ and $r=k+1$. The values $a_{r,i}$ are all equal for $i\in [1,l]$ whenever
	\[(r-t+1)-1 \le (t+l-r+1)/2,\]
	which is equivalent to $k+1=r\le 3t/2+1/2$.
	In the same way as before, we may conclude that $a_{i,k+1}=1$ if $i\in[1,k+1]$.
	Thus, $\M[1,k+1;1,k+1]$ is $t$-split whenever $k+1\le\floor{3t/2}$.
	
	This process stops when either $r$ or $c$ exceeds $\floor{3t/2}$ or the corresponding dimension of $\M$.
\end{proof}

In view of the previous Claim we may assume that $m\ge \floor{\frac{3t}{2}}+1$, otherwise $\M=\M_1$ is a split matrix. Since $m\le n+1$, this implies that
\begin{equation}\label{eq:upper_t}
	t\le\frac{2n+1}{3},
\end{equation}
which will be relevant later.
In the case in which $\M_1$ does not cover $\M$, we may infer the values of additional entries of $\M$. This is done in a similar way to Claim \ref{claim:tsplit2}, although we are no longer able to obtain a $t$-split matrix. Instead, we obtain five regions outside of $\M_1$ for which $a_{i,j}=1$. These are illustrated in Figure \ref{fig:ext1}.

\begin{figure}
	\includegraphics[scale=0.7]{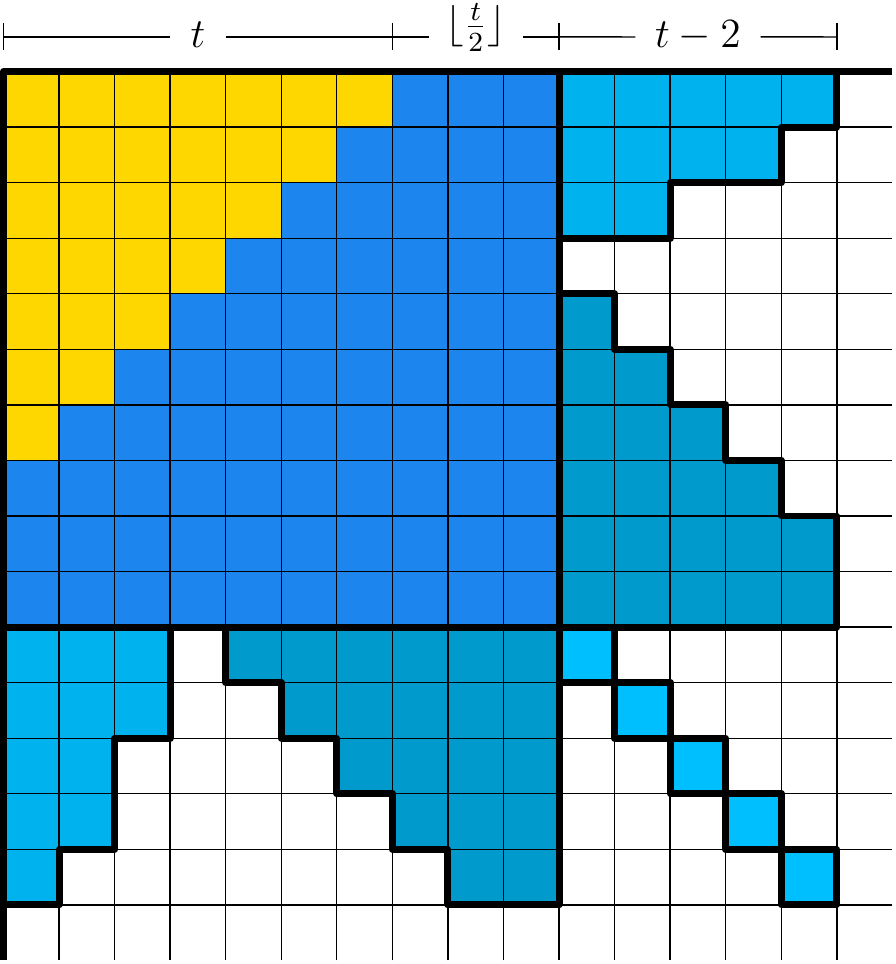}
	\caption{The regions described in Claims \ref{claim:tsplit}, \ref{claim:tsplit2} and \ref{claim:ones}. The colors yellow and blue represent values of $-1$ and $1$, respectively. The matrix $\M$ is not actually large enough to contain all of the marked squares.}
	\label{fig:ext1}
\end{figure}

The first region has a triangular shape bounded by the first column outside of $\M_1$, the first row of $\M$ and a certain line of slope $\frac 12$.
The second region is bounded by by the first column outside of $\M_1$, the last row of $\M_1$ and a line of slope $-1$.
Of course, there are corresponding regions to these below $\M_1$.
Lastly, the entries of the diagonal of $\M$ which are outside of $\M_1$ must also have value $1$.
This is formalized in the following claim.

\begin{claim}\label{claim:ones}
	Let $T=\floor{\frac {3t}2}$, then $a_{i,j}=1$ and $a_{j,i}=1$ whenever $T < j$ and any of the following hold:
	\begin{enumerate}
		\item[(a)] $i \le \floor{\frac{T+t+1-j}{2}}$,
		\item[(b)] $j-t< i\le T$, or
		\item[(c)] $i=j$.
	\end{enumerate}
\end{claim}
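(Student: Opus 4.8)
The plan is to extend the reasoning of Claim~\ref{claim:tsplit2} past the point where a genuine $t$-split block can no longer be maintained, propagating the known values of $\M_1$ outward diagonal by diagonal (equivalently, anti-diagonal by anti-diagonal) using the zero-sum-square-free property exactly as in Lemmas~\ref{lem:tool} and~\ref{lem:tool2}. Concretely, I would induct on $j$ starting from $j=T+1$: assuming all the claimed entries in columns $T+1,\dots,j-1$ (and the symmetric rows) are already known to equal $1$, together with the fact that $\M_1$ is $t$-split from Claim~\ref{claim:tsplit2}, I would show each new entry $a_{i,j}$ with $i$ in the prescribed range equals $1$ by exhibiting a square one of whose other three entries is forced to be $-1$ (or two of which are forced to disagree). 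By the symmetry $a_{i,j}\leftrightarrow a_{j,i}$ of the construction — $\M_0$ and $\M_1$ are symmetric split blocks — it suffices to prove the statement for $a_{i,j}$; the statement for $a_{j,i}$ follows by transposition.

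For region~(a), i.e. $i\le\floor{(T+t+1-j)/2}$, I would mimic the first square in the proof of Lemma~\ref{lem:tool2}: pick a square with top-left corner inside $\M_1$ on an anti-diagonal $\le t+1$ (so that entry is $-1$) and top-right corner in a column where the value is already known to be $1$, forcing the bottom two entries to agree and hence $a_{i,j}=1$; the arithmetic constraint $i\le\floor{(T+t+1-j)/2}$ is precisely what makes the needed corner land on anti-diagonal $\le t+1$, analogous to the bound $c\le 3t/2$ appearing in Claim~\ref{claim:tsplit2}. For region~(b), $j-t<i\le T$, I would instead use a square whose other three entries sit in column $1$ (value $-1$ since the row index is $\le t$, using $i>j-t$) and in row $i-?$ of $\M_1$, combined with region~(a) or the already-established part of $\M_1$, exactly paralleling the second square of Lemma~\ref{lem:tool2}. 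For region~(c), the diagonal entries $a_{j,j}$ with $j>T$: once regions~(a) and~(b) give a $1$ immediately to the left of $a_{j,j}$ in row $j$ and a $1$ immediately above it in column $j$ (both within the already-treated ranges, which one checks by the inequalities), the square formed by those two entries together with a known entry of $\M_1$ or of region~(b) forces $a_{j,j}=1$ — this is the same trick as the closing square in Claim~\ref{claim:tsplit2}.

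The routine but essential bookkeeping is to verify that, for each target entry, the auxiliary square actually fits inside $\M$ (all four indices between $1$ and $n$ or $m$) and that its three non-target entries have indeed already been determined by the induction hypothesis on $j$ or by Claim~\ref{claim:tsplit2}; here one uses $m\ge T+1$ (so that columns past $T$ exist) and the bound \eqref{eq:upper_t}, $t\le(2n+1)/3$, to ensure $T=\floor{3t/2}\le n$ so that $\M_1$ really is the block $\M[1,T;1,T]$ and not truncated by the size of $\M$. I expect the main obstacle to be precisely this index chase near the "corners" of the regions — in particular checking that region~(a)'s triangular bound and region~(b)'s range overlap or abut correctly so that the diagonal entries in~(c) are reachable, and that no off-by-one error creeps in when $j$ is close to $T+1$ versus close to its maximum. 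None of the individual square arguments is hard; the care lies in organizing the order of propagation (I would do all of region~(a) and its transpose first, then region~(b) and its transpose, then region~(c)) so that every square invoked references only previously-settled entries.
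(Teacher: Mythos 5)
Your strategy for parts (a) and (b) is essentially the paper's: induct on the column index $j$ from $T+1$ upward, apply Lemma~\ref{lem:tool2} (with $k=l=T$, $c=j$) to force all the entries $a_{i,j}$ with $i\in[1,\floor{(t+T-j+1)/2}]\cup[j-t+1,T]$ to agree, and then anchor the common value to $1$ with one square referencing the already-settled column $j-1$ (the paper uses the square on rows $T-1,T$ and columns $j-1,j$); the row statement follows by the symmetric argument. So far this matches the paper's proof.

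Your argument for part (c), however, does not work as described, for two reasons. First, the entries ``immediately to the left of'' and ``immediately above'' $a_{j,j}$ for $j>T+1$ are \emph{not} covered by regions (a) and (b): region (b) gives $a_{j,i}=1$ only for $i\le T$, so $a_{j,j-1}$ is undetermined once $j-1>T$. Second, even if those neighbours were known to equal $1$, the square they determine has fourth corner $a_{j-1,j-1}$, which is also $1$ (it lies in $\M_1$ below the $t$-th diagonal, or is a previously-settled diagonal entry); a square with three entries equal to $1$ is automatically non-zero-sum and therefore imposes no constraint whatsoever on $a_{j,j}$. To force a value you need a $-1$ among the three known corners. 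The paper's fix is to take the square with corners $(1,1)$, $(1,j)$, $(j,1)$, $(j,j)$: here $a_{1,1}=-1$ while $a_{1,j}=a_{j,1}=1$ by part (a) (valid since $j\le T+t-2$ gives $1\le\floor{(T+t+1-j)/2}$), so the square already contains opposite values and $a_{j,j}$ must equal $a_{1,j}=1$. This is a localized repair rather than a change of approach, but as written your part (c) is a genuine gap.
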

\begin{proof}
We start with the $t$-split block $\M_1 = \M\left[1,T;1,T\right]$ from Claim \ref{claim:tsplit2}.

We inductively deduce the values in column $j$ starting with $j=T+1$ and increasing $j$ one by one. Take $k=l=T$ and $c=j$ in Lemma \ref{lem:tool2} for columns.

Note that \eqref{eq:lower_t} implies that $j \le T+t-2$, so two things happen;
the values $a_{i,j}$ are all equal for
\[i\in [1,\floor{(t+T-j+1)/2}]\cup [j-t+1,T]\]
and the interval $[j-t+1,T]$ contains at least two elements.
By considering the square
\[\begin{pmatrix}
a_{T-1,j-1}& a_{T-1,j}\\
a_{T,j-1}& a_{T,j}
\end{pmatrix}\]
and using the fact that the elements $a_{T-1,j-1}$ and $a_{T,j-1}$ from the previous column have value $1$, we conclude that all the $a_{i,j}$ described above are equal to $1$.

Analogously, using Lemma \ref{lem:tool2} for rows, we can say the same for $a_{j,i}$.

Condition $i\in [1,\floor{(t+T-j+1)/2}]$ is equivalent to $i \le \floor{(T+t+1-j)/2}$ which proves part (a) of the claim, while $i\in [j-t+1,T]$ is equivalent to $j-t+1\le i \le T$ which proves part (b).

To prove part (c), for $T<i=j\le T+t-2$, consider the square
\[\begin{pmatrix}
a_{1,1}& a_{1,i}\\
a_{i,1}& a_{i,i}
\end{pmatrix}.\]
Since $a_{1,1}=-1$ and $a_{i,1}=a_{1,i}=1$, we must have that $a_{i,i}=1$.
\end{proof}

Now we can bound the discrepancy of $\M$. Recall from \eqref{eq:pm} that it is enough to know the number of positive entries $a^+$ of $\M$ in order to compute $\disc(\M)$.
Since $2a^+-nm=\disc(\M)\le n$, we have that
\begin{equation}\label{eq:onesle}
a^+\le \frac{n+nm}{2}.
\end{equation}
If this equation is violated, it means that $\M$ is not larger than $\M_1$. So, all that remains is to bound from below the number of positive entries $a^+$ of $\M$.

Let $R=n-T$ and define
\begin{align*}
	a_0 &= \frac{t(t-1)}{2}+\floor{\frac t2}^2+2 t\floor{\frac t2},\\
	a_1 &= 2\sum_{j=T+1}^n \floor{\frac {T+t+1-j}{2}}\\
	&=2\sum_{k=1}^R \floor{\frac {t+1-k}{2}},\\
	a_2 &= 2\sum_{j=T+1}^n (T-j+t)\\
	&= 2 \sum_{k=1}^R (t-k) \text{ and}\\
	a_3 &= R.
\end{align*}

A simple calculation gives the following claim.
\begin{claim}\label{claim:entries1}
	\[a^+\ge a_0+a_1+a_2+a_3\]
\end{claim}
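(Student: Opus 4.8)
The plan is to establish Claim~\ref{claim:entries1} by a direct counting argument: we exhibit a collection of entries of $\M$ that are all forced to equal $1$ by the previous claims, check that these entries are pairwise distinct (i.e.\ the regions they come from do not overlap), and then sum their cardinalities to obtain the bound $a^+\ge a_0+a_1+a_2+a_3$. First I would account for $a_0$: inside the $t$-split block $\M_1=\M[1,T;1,T]$, the positive entries are exactly those cells $(i,j)$ with $i+j>t+1$ lying in the $T\times T$ square. Since $T=\floor{3t/2}\ge t$, the count of such entries is computed from Observation~\ref{obs:discdiag} (or directly): of the full $T\times T$ block, the $-1$'s form the staircase $\{i+j\le t+1\}$, which has $\binom{t+1}{2}=t(t-1)/2+t$ cells — but one must be careful, as the expression $a_0=\frac{t(t-1)}{2}+\floor{\frac t2}^2+2t\floor{\frac t2}$ is written as a sum of positive contributions, so I would instead count the $1$'s region-by-region inside $\M_1$ (the band $t+1<i+j\le T+1$ and the full sub-square $i,j>?$), matching it termwise to the three summands of $a_0$.

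Next I would account for $a_1$, $a_2$, $a_3$ using Claim~\ref{claim:ones}. For each column $j$ with $T<j\le n$ (equivalently $j=T+k$ with $1\le k\le R$), part~(a) of Claim~\ref{claim:ones} gives $\floor{(T+t+1-j)/2}=\floor{(t+1-k)/2}$ forced $1$'s in the top portion of that column, and by symmetry the same number in the corresponding row; summing over $k$ gives $a_1=2\sum_{k=1}^R\floor{(t+1-k)/2}$. Similarly part~(b) gives, in column $j=T+k$, the entries with $j-t<i\le T$, i.e.\ $T+k-t<i\le T$, which is $t-k$ entries (using $T\ge t$ so this is nonempty exactly when $k<t$, consistent with $t-k$ being the count when positive); doubling for rows yields $a_2=2\sum_{k=1}^R(t-k)$. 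Part~(c) contributes the $R$ diagonal entries $a_{j,j}$ with $T<j\le n$, giving $a_3=R$. The rewriting of $a_1$ from a sum over $j$ to a sum over $k$, and of $a_2$ likewise, is just the substitution $k=j-T$, $R=n-T$, which I would note in one line.

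The main obstacle is verifying that the four families of forced $1$-entries (the interior of $\M_1$, the triangular region of part~(a), the region of part~(b), and the diagonal of part~(c), together with their reflections across the main diagonal) are pairwise disjoint, so that no entry is double-counted and the cardinalities genuinely add. The potential overlaps to rule out are: (i) region~(a) vs region~(b) within the same column — these are disjoint provided $\floor{(T+t+1-j)/2}<j-t$, which should follow from $j>T=\floor{3t/2}$; (ii) region~(b) vs the interior of $\M_1$ — disjoint because region~(b) sits in columns $j>T$ while $\M_1$ occupies columns $\le T$; (iii) the row-version of (a) or (b) vs the column-version — the row-families live in rows $>T$ and columns $\le T$ while the column-families live in columns $>T$ and rows $\le T$, so these meet only possibly on the diagonal, which is precisely where part~(c) lives and is counted once; (iv) the diagonal entries of part~(c) vs everything else — immediate since those have $i=j>T$. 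Once disjointness is confirmed, the inequality $a^+\ge a_0+a_1+a_2+a_3$ is just additivity of counting, and the ``simple calculation'' amounts to these bookkeeping checks plus the index substitutions; I would present it as a short paragraph rather than a formal proof, since each step is elementary.
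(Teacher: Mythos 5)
Your proposal is correct and follows essentially the same route as the paper: the paper's proof simply states that $a_0$ counts the positive entries of the $t$-split block $\M_1$ and that $a_1,a_2,a_3$ count (or, when $m=n+1$, undercount) the entries forced by parts (a), (b), (c) of Claim~\ref{claim:ones}. Your additional bookkeeping --- the disjointness checks and the substitution $k=j-T$ --- is exactly the ``simple calculation'' the paper leaves implicit, and it goes through as you describe.
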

\begin{proof}
	The number of positive entries in $\M_1$, described in Claim \ref{claim:tsplit2}, is $a_0$.
	
	If $m=n$, then $a_1$, $a_2$ and $a_3$ equal the number of positive entries described in parts (a), (b) and (c) of Claim \ref{claim:ones}, respectively.
	
	If $m=n+1$, by ignoring the positive entries in the last column of $\M$, we obtain that $a_1$, $a_2$ and $a_3$ are lower bounds for the number of positive entries described in parts (a), (b) and (c) of Claim \ref{claim:ones}, respectively.
\end{proof}

Recall that we are currently dealing with $n\ge 12$ and, from \eqref{eq:lower_t} and \eqref{eq:upper_t},
\begin{equation}\label{eq:tinterval}
\floor{\frac{n}{2}} \le t \le \floor{\frac{2n+1}{3}}.
\end{equation}
Before simplifying this lower bound, we can check that \eqref{eq:onesle} cannot be satisfied for small values of $n$. The following claim can be easily verified with aid from a computer.

\begin{claim}\label{claim:small}
	For $12\le n\le 15$ and $\floor{\frac n2}\le t\le \floor{\frac {2n+1}{3}}$, we have that
	\[a_0+a_1+a_2+a_3 > \frac {(n^2+2n)}{2}\ge \frac {(n+nm)}{2}.\]
\end{claim}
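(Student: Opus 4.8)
The plan is to regard Claim~\ref{claim:small} as a finite verification. The second inequality $\frac{n^2+2n}{2}\ge\frac{n+nm}{2}$ is immediate: since $m\le n+1$ we have $nm\le n^2+n$, hence $n+nm\le n^2+2n$. So the substantive content is the strict inequality $a_0+a_1+a_2+a_3>\frac{n^2+2n}{2}$, which must be checked for every admissible pair $(n,t)$.

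First I would enumerate the admissible pairs. For each $n\in\{12,13,14,15\}$ the constraint $\floor{\frac n2}\le t\le\floor{\frac{2n+1}{3}}$ leaves only a few values of $t$ (for instance $t\in\{6,7,8\}$ when $n=12$), so there are fewer than twenty pairs altogether. For each pair the auxiliary quantities $T=\floor{\frac{3t}{2}}$ and $R=n-T$ are determined, and then $a_0=\frac{t(t-1)}{2}+\floor{\frac t2}^2+2t\floor{\frac t2}$ is given in closed form while $a_1=2\sum_{k=1}^R\floor{\frac{t+1-k}{2}}$, $a_2=2\sum_{k=1}^R(t-k)$ and $a_3=R$ are short finite sums of length $R$, which is small for these values of $n$.

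Next I would carry out the arithmetic: for each pair, evaluate $a_0$ directly, evaluate $a_1$ and $a_2$ term by term (splitting on the parity of $t+1-k$ for $a_1$), set $a_3=R$, add the four numbers, and compare with $\frac{n^2+2n}{2}$. In every case the left-hand side exceeds the right-hand side, usually by a wide margin, since $a_0$ alone is quadratic in $t$ and $t$ is of order $n$. This is purely mechanical and is exactly the bounded computation that a computer, or a careful reader, can confirm line by line; organising it as a small table indexed by $(n,t)$ with columns $T$, $R$, $a_0$, $a_1$, $a_2$, $a_3$, the total and $\frac{n^2+2n}{2}$ makes the check transparent.

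The only real obstacle is that there is no shortcut available: Claim~\ref{claim:small} is stated precisely because the asymptotic lower bound for $a_0+a_1+a_2+a_3$ developed in the remainder of the proof is not yet strong enough for these few small values of $n$, so they have to be treated individually rather than subsumed into the general estimate.
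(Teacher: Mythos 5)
Your proposal is correct and is essentially identical to the paper's treatment: the paper disposes of Claim~\ref{claim:small} with the single remark that it ``can be easily verified with aid from a computer,'' i.e.\ precisely the finite enumeration of the (fourteen) admissible pairs $(n,t)$ and direct evaluation of $a_0+a_1+a_2+a_3$ that you describe. Your observation that the second inequality follows from $m\le n+1$ and that the general analytic bound only kicks in at $n\ge 16$ is also exactly the paper's rationale for isolating these cases.
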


Therefore, we may assume that $n\ge 16$. What follows is a series of algebraic manipulations to obtain a simpler lower bound for $a^+$ which can be analyzed analytically.

\begin{claim}
	For $n\ge 16$ and $\floor{\frac n2}\le t\le \floor{\frac {2n+1}{3}}$, we have that
	\[a_0+a_1+a_2+a_3 \ge \frac{23 n^2-70 n-77}{32}.\]
\end{claim}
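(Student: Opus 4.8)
The plan is to replace each of $a_0,a_1,a_2,a_3$ by an explicit lower bound that is a genuine polynomial in $n$ and $t$ (no floor functions), add these up, eliminate $R$, and then minimise the resulting quadratic over the interval $\floor{\frac n2}\le t\le\floor{\frac{2n+1}{3}}$. Throughout I keep the notation $T=\floor{\frac{3t}{2}}$ and $R=n-T$ from the definitions of the $a_i$.

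First I would remove the floors from each summand. Splitting into the cases $t$ even and $t$ odd, a direct computation gives $a_0=\frac{7t^2-2t}{4}$ in the first case and $a_0=\frac{7t^2-8t+1}{4}$ in the second, so in either case $a_0\ge\frac{7t^2-8t}{4}$. Since $\floor{\frac{t+1-k}{2}}\ge\frac{t-k}{2}$ for every $k$, we have $a_1\ge 2\sum_{k=1}^{R}\frac{t-k}{2}=Rt-\frac{R(R+1)}{2}$, while $a_2=2Rt-R(R+1)$ and $a_3=R$ are already closed forms. Adding these four bounds,
\[
a_0+a_1+a_2+a_3\ \ge\ L(R,t):=\frac{7t^2-8t}{4}+3Rt-\frac{3R^2+R}{2}.
\]

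Next I would eliminate $R$. Because $R=n-\floor{\frac{3t}{2}}\le\frac n4+1$ while $t\ge\floor{\frac n2}\ge\frac{n-1}{2}$, one checks $R<t-\frac16$ for all $n\ge 16$; hence $\partial L/\partial R=3t-3R-\frac12>0$ on the whole interval $\bigl[\,n-\frac{3t}{2},\,R\,\bigr]$, where the true value of $R$ lies since $\floor{\frac{3t}{2}}\le\frac{3t}{2}$. Therefore $L(R,t)\ge L\bigl(n-\frac{3t}{2},t\bigr)=:g(t)$, and expanding gives the honest quadratic
\[
g(t)=-\frac{49}{8}\,t^2+\left(\frac{15n}{2}-\frac54\right)t-\frac{3n^2}{2}-\frac n2 .
\]
Now $g$ is concave in $t$, so over the real interval $\bigl[\,\frac n2-\frac12,\,\frac{2n+1}{3}\,\bigr]$, which contains every admissible $t$, it is minimised at an endpoint. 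One has $g(\tfrac n2)=\frac{23n^2-36n}{32}$, and by concavity $g(\tfrac n2-\tfrac12)\ge g(\tfrac n2)-\tfrac12\,g'(\tfrac n2-\tfrac12)=\frac{23n^2-58n-78}{32}$; at the other endpoint $g\bigl(\frac{2n+1}{3}\bigr)$ has leading term $\frac79 n^2>\frac{23}{32}n^2$ and comfortably clears the target. Since $\frac{23n^2-58n-78}{32}-\frac{23n^2-70n-77}{32}=\frac{12n-1}{32}>0$, this yields $a_0+a_1+a_2+a_3\ge\frac{23n^2-70n-77}{32}$, as claimed.

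The only genuine difficulty here is the bookkeeping with the floor functions: one must check that the slack deliberately built into the right-hand side — the $-70n-77$, rather than the $-36n$ that appears at $t=n/2$ with no rounding — really does swallow every rounding loss (in $a_1\ge Rt-\frac{R(R+1)}{2}$, in passing from $R$ to $n-\frac{3t}{2}$, and in replacing $\floor{\frac n2}$ by $\frac n2-\frac12$), the tight case being $n=16$. The estimate above shows it does, with $\frac{12n-1}{32}$ to spare.
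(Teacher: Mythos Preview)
Your argument is essentially the paper's: replace the floors by polynomial lower bounds, eliminate $R$, and minimise the resulting concave quadratic in $t$ over the admissible interval. One minor slip: the bound $R\le\frac n4+1$ is not quite right (e.g.\ $n=19$, $t=9$ gives $R=6$); the correct estimate is $R\le\frac{n+5}{4}$, but this still yields $R<t-\frac16$ for all $n\ge 8$, so nothing breaks. The paper differs only cosmetically, bounding the positive and negative $R$-terms separately using both $R\ge n-\tfrac{3t}{2}$ and $R\le n-\tfrac{3t-1}{2}$ rather than invoking monotonicity, and then evaluating directly at $t=\tfrac{n-1}{2}$ to land exactly on $\frac{23n^2-70n-77}{32}$.
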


\begin{proof}
We can remove the integer parts in $a_0+a_1+a_2+a_3$ by using that, for any integer $x$, $\frac{x-1}2\le \floor{\frac x2}\le \frac x2$. It is convenient to do this in two parts, first we apply these inequalities but leave the variable $r$ as it is. This gives
\begin{align*}
	a_0+a_1+a_2+a_3 &= \frac{t(t-1)}{2}+\floor{\frac t2}^2+2 t\floor{\frac t2} + 2\sum_{k=1}^R \left(\floor{\frac {t+1-k}{2}}\right)\\
	&\quad + 2 \sum_{k=1}^R (t-k) + R\\
	&\ge \frac{t(t-1)}{2}+\left(\frac{t-1}2\right)^2+t(t-1) + 2\sum_{k=1}^R\left(\frac{t-k}{2}\right)\\
	&\quad + 2\sum_{k=1}^R\left(t-k\right) +R\\
	&= \frac{7 t^2}{4} - 2 t + \frac{1}{4}+3 R t-\frac{3}{2} R^2-\frac{1}{2}R.
\end{align*}
Since $R=n-\floor{\frac {3t}2}$ we have that $n-\frac{3t}2\le R\le n-\frac{3t-1}2$, using this on the last expression we obtain
\begin{align*}
	a_0+a_1+a_2+a_3 &\ge \frac{7 t^2}{4} - 2 t + \frac{1}{4}+3 \left(n-\frac{3t}2\right)t\\
	&\quad -\frac{3}{2} \left(n-\frac{3t-1}2\right)^2-\frac{1}{2}\left(n-\frac{3t-1}2\right)\\
	&= -\frac{49 t^2}{8}+\frac{15 n t}{2}+t-\frac{3 n^2}{2}-2 n-\frac{3}{8}.
\end{align*}
To minimize this last expression think of $n$ as fixed and consider it as a function of $t$. Then this is an upside-down parabola and, from \eqref{eq:tinterval}, the relevant values for $t$ are contained in the interval $\left[\frac{n-1}{2},\frac{2n+1}{3}\right]$. Therefore the parabola is bounded from below by the minimum between the values at $t=\frac{n-1}{2}$ and $t=\frac{2n+1}{3}$. These are, respectively,
\[
	\frac{23 n^2-70 n-77}{32}\qquad \text{and} \qquad\frac{14 n^4-28 n -13}{18}.
\]
The former gives the smallest value.
\end{proof}

To conclude the proof, notice that the parabolas $\frac{1}{32}(23 n^2-70 n-77)$ and $\frac 12 (n^2+2n)$ intersect twice, once in the interval $(-1,0)$ and a second time in the interval $(15,16)$. Since $n\ge 16$, we have
\[
	a^+ \ge a_0+a_1+a_2+a_3 \ge \frac{23 n^2-70 n-77}{32} > \frac{n^2+2n}{2}.
\]
This contradicts \eqref{eq:onesle}, so $\M=\M_1$ and therefore $\M$ is a split matrix.
\end{proof}

\section{Conclusions and further work}\label{sec:concl}
We were able to give an elemental proof of Theorem \ref{thm:main}, but we are sure that there is a deeper result in the direction of Conjecture \ref{conj}. It is also likely that something can be said for non-square matrices.
The fact that the final bound given for $a^+$ is significantly smaller than $n^2$ suggests that a much stronger theorem should hold.
It is possible to strengthen our proof to obtain a stronger version of Theorem \ref{thm:disc_m} with something like $\abs{\disc(\M)}\le 2n$ instead of $\abs{\disc(\M)}\le n$, however significantly more work is required to establish this and it is probably not worth the effort.

In \cite{RBM2010} Erickson matrices were generalized to \emph{$3$-squares}. A $k$-square in a matrix $\M$ is a $k\times k$ sub-matrix of $\M$ contained in $k$ rows of $\M$ of the form $i,i+s,\dots,i+(k-1)s$ and $k$ columns of $\M$ of the form $j,j+s,\dots,j+(k-1)s$.
We could ask about zero-sum-$k$-square-free binary matrices but this does not make sense when $k$ is odd. However, the case when $k$ is even seems interesting. For odd $k$ we can ask about binary matrices which don't have $k$-squares of sum $\pm 1$.

Lastly, we should point out that with the aid of Claims \ref{claim:tsplit} and \ref{claim:ones}, or with stronger versions of this claim, zero-sum-square-free matrices of much larger sizes may be analyzed by a computer. This might be useful for generalizing our results. However, a different type of computer search might likely be much more useful.
SAT-solvers have been used for finding lower bounds in Ramsey-like problems (see e.g. \cite{HHLM2007}) but it is not obvious how to include the discrepancy condition here. Perhaps linear integer programming could work.
Since we didn't need to analyze anything larger than an $11\times 12$ matrix, we didn't work much on making our program efficient.

\section*{Acknowledgments}
The authors would like to thank the anonymous referee for his comments which improved the paper greatly. They are also thankful for the facilities provided by the Banff International Research Station ``Casa Matemática Oaxaca'' during the ``Zero-Sum Ramsey Theory: Graphs, Sequences and More'' workshop (19w5132).
This research was supported by CONACyT project 282280 and PAPIIT project IN116519.


\begin{thebibliography}{HHLM07}
	
	\bibitem[AM08]{AM2008}
	M.~Axenovich and J.~Manske, \emph{On monochromatic subsets of a rectangular
		grid}, Integers \textbf{8} (2008), A21, 14.
	
	\bibitem[BE10]{BE2010}
	R.~Bacher and S.~Eliahou, \emph{Extremal binary matrices without constant
		2-squares}, J. Comb. \textbf{1} (2010), no.~1, 77--100.
	
	\bibitem[BCRY02]{BCRY02}
	P. Balister, Y. Caro, C. Rousseau and R. Yuster, \emph{Zero-sum square matrices},
	Eur. J. Combin., \textbf{23} (2002), no. 5, 489--497.
	
	\bibitem[BE11]{BE11}
	Y. Buttkewitz and C. Elsholtz, \emph{Patterns and complexity of multiplicative functions},
	J. London Math. Soc. (2) \textbf{84} (2011), no. 3, 578--594.
		
	\bibitem[Cha01]{Cha2001}
	B.~Chazelle, \emph{The discrepancy method: randomness and complexity},
	Cambridge University Press, 2001.
	
	\bibitem[CHM19]{CHM2019}
	Y.~Caro, A.~Hansberg, and A.~Montejano, \emph{Zero-sum subsequences in
		bounded-sum {$\{-1,1\}$}-sequences}, J. Combin. Theory Ser. A \textbf{161}
	(2019), 387--419.
	
	\bibitem[Eri96]{Eri1996}
	M.~J. Erickson, \emph{Introduction to combinatorics}, Wiley-Interscience Series
	in Discrete Mathematics and Optimization, John Wiley \& Sons, Inc., New York,
	1996, A Wiley-Interscience Publication.
	
	\bibitem[HHLM07]{HHLM2007}
	P.~R. Herwig, M.~J.~H. Heule, P.~M. {van Lambalgen}, and H.~{van Maaren},
	\emph{A new method to construct lower bounds for van der {W}aerden numbers},
	Electron. J. Combin. \textbf{14} (2007), no.~1, Research Paper 6, 18.
	
	\bibitem[RBM10]{RBM2010}
	D.~Robilliard, A.~Boumaza, and V.~M{arion-Poty}, \emph{Meta-heuristic search
		and square {E}rickson matrices}, IEEE Congress on Evolutionary Computation,
	IEEE, 2010, pp.~1--8.
	
	\bibitem[Tao16]{Tao2016}
	T.~Tao, \emph{The {E}rdős discrepancy problem}, Discrete Anal. (2016),
	Paper No. 1, 29.
	
\end{thebibliography}
\end{document}